\newtheorem{theorem}{Theorem}
\newtheorem*{theoremCL}{Theorem CL (\cite{CL})}
\newtheorem{lemma}[theorem]{Lemma}
\newtheorem{prop}[theorem]{Proposition}
\newtheorem{obs}[theorem]{Observation}
\theoremstyle{remark}
\newtheorem{problem}{Problem}
\theoremstyle{definition}
\title{\hspace*{-8pt} Gallai colorings and domination in multipartite digraphs}
\author{{\bf Andr\'as Gy\'arf\'as}
\thanks{Research partially supported by the Hungarian Foundation for Scientific
Research Grant (OTKA) No.~K68322.}\\
Computer and Automation Research Institute, \vspace*{-1pt} \\
Hungarian Academy of Sciences, \vspace*{-1pt} \\
1518 Budapest, P.O. Box 63, Hungary, \vspace*{-1pt} \\
{\tt gyarfas@sztaki.hu}
\and
{\bf G\'abor Simonyi}
\thanks{Research partially supported by the Hungarian Foundation for
  Scientific Research Grant (OTKA) Nos.~K76088 and NK78439.}\\
Alfr\'ed R\'enyi Institute of Mathematics, \vspace*{-1pt} \\
Hungarian Academy of Sciences, \vspace*{-1pt} \\
1364 Budapest, P.O. Box 127, Hungary, \vspace*{-1pt} \\
{\tt simonyi@renyi.hu}
\and
{\bf \'Agnes T\'oth}
\thanks{Research partially supported by the Hungarian Foundation for Scientific Research Grant and by the National
Office for Research and Technology (Grant number OTKA 67651).} \\
Department of Computer Science and Information Theory, \vspace*{-1pt} \\
Budapest University of Technology and Economics, \vspace*{-1pt} \\
1521 Budapest, P.O. Box 91, Hungary, \vspace*{-1pt} \\
{\tt tothagi@cs.bme.hu}}
\date{}
\begin{document}

\maketitle

\begin{abstract}
Assume that $D$ is a digraph without cyclic triangles and its vertices are partitioned into classes $A_1,\dots,A_t$ of independent vertices. A set $U=\cup_{i\in S} A_i$ is called a dominating set of size $|S|$ if for any vertex $v\in \cup_{i\notin S} A_i$ there is a $w\in U$ such that $(w,v)\in E(D)$. Let $\beta(D)$ be the cardinality of the largest independent set of $D$ whose vertices are from different partite classes of $D$. Our main result says that there exists a $h=h(\beta(D))$ such that $D$ has a dominating set of size at most $h$. This result is applied to settle a problem related to generalized Gallai colorings, edge colorings of graphs without $3$-colored triangles.
\end{abstract}

\section{Introduction}

Investigating comparability graphs Gallai \cite{GA} proved an interesting
theorem about edge-colorings of complete graphs that contain no triangle for which all
three of its edges receive distinct colors. Such colorings turned out to be relevant and
Gallai's theorem proved to be useful also in other contexts, see e.g., \cite{BPV, CE, CEL, FMO, Gur, GYS, KS, KST}.

Honoring the above mentioned work of Gallai an edge-coloring of the complete graph is called
a Gallai coloring if there is no completely multicolored triangle. Recently this notion was extended
to other (not necessarily complete) graphs in \cite{GYS1}.

A basic property of Gallai colored complete graphs is that at least
one of the color classes  spans a connected subgraph on the entire
vertex set. In \cite{GYS1} it was proved that if we color the edges
of a not necessarily complete graph $G$ so that no $3$-colored
triangles appear then there is still a large monochromatic connected
component whose size is proportional to $|V(G)|$ where the
proportion depends on the independence number $\alpha(G)$.

In view of this result it is natural to ask whether one can also
span the whole vertex set with a constant number of connected
monochromatic subgraphs where the constant depends only on
$\alpha(G)$. This question led to a problem about existence of
dominating sets in directed graphs that we believe to be interesting
in itself. In this paper we solve this latter problem thereby giving
an affirmative answer to the previous question.

The paper is organized as follows. In Subsection~\ref{multi} we
describe our digraph problem and state our results on it. The
connection with Gallai colorings will be explained in Subsection~
\ref{secgall}. Section~\ref{proofs} contains the proofs of the
results in Subsection~\ref{multi}. In Section~\ref{concl} we further
elaborate on a question the proofs give rise to.

\subsection{Dominating multipartite digraphs} \label{multi}

We consider multipartite digraphs, i.e., digraphs $D$ whose vertices are partitioned into classes $A_1,\dots,A_t$ of
independent vertices. Suppose that $S\subseteq [t]$. A set $U=\cup_{i\in S} A_i$ is called a dominating set of size
 $|S|$ if for any vertex $v\in \cup_{i\notin S} A_i$ there is a $w\in U$ such that $(w,v)\in E(D)$. The smallest $|S|$
 for which a multipartite digraph $D$ has a dominating set $U=\cup_{i\in S} A_i$ is denoted by $k(D)$. Let $\beta(D)$ be
 the cardinality of the largest independent set of $D$ whose vertices are from different partite classes of $D$.
 (Such independent sets we sometimes refer to as transversal independent sets.)
An important special case is when $|A_i|=1$ for each $i\in [t]$. In this case
$\beta(D)=\alpha(D)$ and $k(D)=\gamma(D)$, the usual domination number of $D$, the smallest
number of vertices in $D$ whose closed outneighborhoods cover $V(D)$. Our main result is the following theorem.

\begin{theorem}\label{main}
For every integer $\beta$ there exists an integer $h=h(\beta)$ such that the following holds.
If $D$ is a multipartite digraph such that $D$ contains no cyclic triangle and $\beta(D)=\beta$, then $k(D)\le h$.
\end{theorem}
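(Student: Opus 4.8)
The plan is to combine the Chvát­al--Lovász quasi-kernel theorem (Theorem~CL) with a reduction on $\beta$. First I would record a counting observation that lets me pass between vertices and classes: if $W\subseteq V(D)$ is independent in $D$, then picking one vertex from each partite class that $W$ meets produces a transversal independent set, so $W$ meets at most $\beta(D)$ classes. Thus every independent set automatically occupies at most $\beta$ classes, which is what converts vertex statements into statements about the index set $S$.

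Next I apply Theorem~CL to the reverse of $D$ to obtain an independent set $Q$ that \emph{out-dominates within distance two}: for every $v\in V(D)$ there is a directed path of length at most $2$ from $Q$ to $v$. By the observation, the set $S$ of classes meeting $Q$ has $|S|\le\beta$. Put $U=\cup_{i\in S}A_i$ and let $B$ be the vertices lying outside $U$ that are \emph{not} dominated by $U$. Since $Q\subseteq U$, each $v\in B$ is within distance $2$ but not distance $1$ from $Q$, so there is a path $q\to x\to v$ with $q\in Q$ and $q\not\to v$. Here the hypothesis enters decisively: if $v\to q$ held, then $q\to x\to v\to q$ would be a cyclic triangle, which is forbidden; hence $q$ and $v$ are non-adjacent. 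Note also that the classes meeting $B$ are disjoint from $S$.

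The heart of the argument is the claim that $\beta(D[B])\le\beta-1$. Given a transversal independent set $I\subseteq B$, I want to enlarge it by a vertex of $Q$, whose class is automatically disjoint from the classes of $I$. Every $I$--$Q$ edge is forced to point from $I$ into $Q$ (an edge $q\to v$ with $q\in Q$ would dominate $v\in B$), so it suffices to find some $q\in Q$ that receives no edge from $I$, i.e. is non-adjacent to all of $I$. The existence of such a $q$ is driven by the acyclicity of triangles: the distance-$2$ witnesses $q\to x\to v$ attach to each $v\in I$ a quasi-kernel vertex non-adjacent to $v$, and the no-cyclic-triangle condition is what rules out the configurations in which every vertex of $Q$ would be blocked by $I$. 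Pinning down exactly which vertex of $Q$ (or of the classes of $S$) extends $I$ is the step I expect to be the main obstacle, and it is precisely where the absence of cyclic triangles is indispensable. Granting it, $I\cup\{q\}$ is a transversal independent set of size $|I|+1$, whence $|I|\le\beta-1$ and $\beta(D[B])\le\beta-1$.

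Finally I would iterate. The classes in $S$ dominate everything except $B$, and $D[B]$ is again a cyclic-triangle-free multipartite digraph with $\beta(D[B])\le\beta-1$ whose classes are disjoint from $S$; dominating $B$ by full classes of $D$ only helps outside $B$, so a dominating set for $D[B]$ lifts to one for $D$. Applying the reduction repeatedly, round $r$ spends at most $\beta-r+1$ classes and strictly lowers $\beta$, so after at most $\beta$ rounds $B$ is empty. This gives $k(D)\le \beta+(\beta-1)+\cdots+1=\binom{\beta+1}{2}$, so one may take $h(\beta)=\binom{\beta+1}{2}$; the base case $\beta=0$ is vacuous. In the singleton-class case this recovers the statement that the domination number of a cyclic-triangle-free digraph is bounded in terms of its independence number.
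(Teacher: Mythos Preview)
Your argument has a genuine gap at exactly the point you flag with ``Granting it'': the claim that $\beta(D[B])\le\beta-1$ (equivalently, that some vertex of $Q$, or of the classes in $S$, extends every transversal independent set $I\subseteq B$) is not only unproven but false for an arbitrary semi-kernel. Take the six-vertex digraph on $\{q_1,q_2,x_1,x_2,v_1,v_2\}$ with arcs
\[
q_1\!\to x_1,\; q_2\!\to x_2,\; x_1\!\to v_1,\; x_2\!\to v_2,\; v_1\!\to q_2,\; v_2\!\to q_1,\; x_1\!\to q_2,\; x_2\!\to q_1,
\]
and all other pairs non-adjacent. One checks directly that there is no cyclic triangle and $\alpha(D)=2$ (all classes are singletons, so $\beta=\alpha$). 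The set $Q=\{q_1,q_2\}$ is independent and out-dominates within distance two, so it is a legitimate output of Theorem~CL; yet $N_+(Q)=\{x_1,x_2\}$ and $B=\{v_1,v_2\}$ is an independent pair, giving $\alpha(D[B])=2=\alpha(D)$. Neither $q_1$ nor $q_2$ is non-adjacent to both $v_1$ and $v_2$, so no vertex of $Q$ extends $I=\{v_1,v_2\}$. Thus the ``heart of the argument'' fails for this choice of semi-kernel, and nothing in your setup singles out a better one.

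The paper's proof of the singleton case (Theorem~\ref{trivpart}) uses Chv\'atal--Lov\'asz in a way that sidesteps this obstruction: instead of asserting $\alpha(D[T])\le\alpha-1$, it partitions $T$ into at most $|U|\le\alpha$ pieces $L_u=\{w\in T: w\text{ non-adjacent to }u\}$, each of which has $\alpha(D[L_u])\le\alpha-1$, yielding the recursion $f(\alpha)=\alpha+\alpha f(\alpha-1)$ with an extra factor of $\alpha$ that your approach tries to avoid. For the general multipartite statement (Theorem~\ref{main}) the paper does not use semi-kernels at all: it fixes $2\beta$ vertices $k_1,\dots,k_{2\beta}$ from distinct classes maximizing $|\hat N_+(\{k_1,\dots,k_{2\beta}\})|$, stratifies the remaining vertices by the first $k_i$ to which they do \emph{not} send an arc, and uses the extremal choice of the $k_i$ to control the final stratum. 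If you want to salvage your scheme, the natural repair is to imitate this partitioning idea rather than to look for a single extending vertex; as the example shows, the latter need not exist.
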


Notice that the condition forbidding cyclic triangles in $D$ is
important even when $|A_i|=1$ for all $i$ and $\beta(D)=1$, i.e. for
tournaments. It is well known that $\gamma(D)$ can be arbitrarily
large for tournaments (see, e.g., in \cite{AS}), so $h(1)$
would not exist without excluding cyclic triangles. 

From the proof of Theorem \ref{main} we will get a
factorial upper bound for $k(D)$ from the recurrence formula
$h(\beta)=3\beta+(2\beta +1)h(\beta-1)$. We have relatively small
upper bounds on $k$ only for $\beta=1,2$.

\begin{theorem}\label{main12}
Suppose that $D$ is a multipartite digraph such that $D$ has no cyclic triangle. If $\beta(D)=1$ then $k(D)=1$ and if $\beta(D)=2$ then $k(D)\le 4$.
\end{theorem}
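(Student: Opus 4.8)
The plan is to handle the two values of $\beta$ separately, proving $\beta(D)=1$ first and then feeding it into the $\beta(D)=2$ case.

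For $\beta(D)=1$ every two vertices in different classes are adjacent, and I would prove $k(D)=1$ by a monotone shrinking argument. For a class $A_i$ let $B_i$ be the set of vertices outside $A_i$ that are \emph{not} dominated by $A_i$; since $\beta=1$, such a vertex $v$ is adjacent to every vertex of $A_i$ and hence beats all of $A_i$, i.e. $(v,w)\in E(D)$ for all $w\in A_i$. If some $B_i=\emptyset$ we are done, so suppose $B_1\ne\emptyset$ and pick $v\in B_1$ with class $A_2$. The key step is $B_2\subsetneq B_1$. Indeed, if $u\in B_2$ then $u$ beats all of $A_2$, in particular $(u,v)\in E(D)$; for any $w\in A_1$ we have $(v,w)\in E(D)$, so $(w,u)\in E(D)$ would create the cyclic triangle $u\to v\to w\to u$, which is forbidden, whence $(u,w)\in E(D)$. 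Thus $u$ beats all of $A_1$, so $u\in B_1$ (and $u\notin A_1$, since $u\in A_1$ would force the $2$-cycle $u\leftrightarrow v$). As $v\in B_1\setminus B_2$, this gives $|B_2|<|B_1|$. Iterating, the integer $|B|$ strictly decreases along the classes visited, so the process terminates at a class with empty $B$, which dominates everything, i.e. $k(D)=1$.

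For $\beta(D)=2$ I would reduce to the previous case by peeling off neighborhoods. Fix a vertex $x$ with class $A_1$ and split the rest of $V$ into $N^+(x)$, $N^-(x)$, and the set $\bar N(x)$ of vertices in other classes non-adjacent to $x$. Putting $A_1$ into $S$ dominates $N^+(x)$ through $x$. The crucial observation is that $\beta(D[\bar N(x)])\le 1$: two non-adjacent vertices of $\bar N(x)$ in different classes, together with $x$, would form a transversal independent set of size $3$. Hence, by the $\beta=1$ case, a single class $P$ of $D$ dominates $\bar N(x)$, and adding $P$ to $S$ leaves only (part of) $N^-(x)$ undominated. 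Repeating the construction inside $D[N^-(x)]$ with a vertex $y$ — whose class $A_2$ dominates the out-neighbors of $y$, and whose non-neighbors inside $N^-(x)$ again have $\beta\le 1$ and so are dominated by one further class $Q$ — shrinks the undominated part to $W=N^-(x)\cap N^-(y)$, the common in-neighbourhood of $x$ and $y$, while spending the four classes $A_1,A_2,P,Q$.

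The main obstacle is precisely this set $W$: it need not satisfy $\beta(D[W])\le1$, so I cannot simply recurse, and I must instead show that $W$ is already dominated by the four chosen classes, or is empty. My plan is to choose $x$ and $y$ extremally — for instance taking $\{x,y\}$ to be a maximum transversal independent set and choosing $x$ so that $N^-(x)$ is as small as possible — and then to rerun the cyclic-triangle argument from the $\beta=1$ case: for $w\in W$ one has $(w,x),(w,y)\in E(D)$, and combining this with the edges supplied by $P$ and $Q$ should, via the forbidden cyclic triangle, force each such $w$ to be dominated by $A_1$, $A_2$, $P$ or $Q$. Making this absorption step work — showing that no vertex can simultaneously beat $x$, beat $y$, and escape all four classes without creating a cyclic triangle — is the heart of the argument and the step I expect to be the most delicate.
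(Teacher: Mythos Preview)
Your $\beta(D)=1$ argument is correct and is essentially the paper's: where you iterate, the paper simply picks the class $K$ maximizing $|\hat N_+(K)|$, but the cyclic-triangle step establishing $B_2\subsetneq B_1$ (equivalently $\hat N_+(K)\subsetneq \hat N_+(L)$) is identical.

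For $\beta(D)=2$ there is a genuine gap at exactly the point you flag. A vertex $w\in W=N^-(x)\cap N^-(y)$ satisfies $(w,x),(w,y)\in E(D)$, but $P$ was chosen only to dominate $\bar N(x)$ and $Q$ only to dominate the non-neighbours of $y$ inside $N^-(x)$; neither choice constrains how $P$ or $Q$ meets $w$, so the forbidden-cyclic-triangle condition through $x$ or $y$ gives you no grip on edges between $w$ and $P\cup Q$. Your suggested extremal choices do not rescue this either: note that ``$\{x,y\}$ a maximum transversal independent set'' is incompatible with your earlier requirement $y\in N^-(x)$, and minimizing $|N^-(x)|$ compares $w$ to $x$ but does not force $w$ into the outneighbourhood of any of your four classes.

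The paper supplies two ingredients your plan lacks. First, it proves a sharpened $\beta=1$ lemma: the dominating class $K$ contains a \emph{single vertex} $k$ with $V(D)\setminus(K\cup L)\subseteq N_+(k)$ for one exceptional class $L$; this requires a separate argument via cyclic quadrangles. Second, rather than building four classes directly, the paper inducts on $|V(D)|$: delete a vertex $p$, take four classes dominating $D\setminus\{p\}$ by induction, and if they miss $p$ use only the structural consequence that none of them contains an in-neighbour of $p$. The four classes actually output are then the class $P$ of $p$, a class $Q$ dominating the non-neighbours of $p$ (via $\beta=1$), the exceptional class $R$ supplied by the sharpened lemma applied inside that non-neighbour digraph, and one final class obtained after showing that every still-undominated vertex is nonadjacent to the special vertex $q\in Q$, which reduces to $\beta=1$ one last time. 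It is the combination of the induction on $|V|$ and the single-vertex sharpening that makes the absorption step go through; your direct four-class construction does not carry comparable leverage.
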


Though the upper bound on $h(\beta)$ obtained from our proof of
Theorem~\ref{main} is much weaker we could not even rule out the
existence of a bound that is linear in $\beta$. We cannot prove a
linear upper bound even in the special case when every partite class
consists of only one vertex. Nevertheless, we treat this case also
separately and provide a slightly better bound than the one
following from Theorem~\ref{main}. The class of digraphs we have here, i.e.,
those with no directed triangles, is studied already and called the
class of \emph{clique-acyclic digraphs}, see \cite{AH}.
\begin{theorem}\label{trivpart}
Let $f(1)=1$ and for $\alpha\ge 2$, $f(\alpha)=\alpha+\alpha f(\alpha-1)$. If $D$ is a clique-acyclic digraph then $\gamma(D)\le f(\alpha(D))$.
\end{theorem}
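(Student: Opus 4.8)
The plan is to argue by induction on $\alpha=\alpha(D)$, matching the recurrence $f(\alpha)=\alpha+\alpha f(\alpha-1)$. For the base case $\alpha=1$ the underlying graph of $D$ is complete, so $D$ is a tournament; being clique-acyclic it has no directed triangle and is therefore transitive, and its source dominates everything, giving $\gamma(D)=1=f(1)$. For the inductive step I want to spend $\alpha$ vertices outright and then solve $\alpha$ subproblems, each on an induced subdigraph of independence number at most $\alpha-1$, so that induction contributes $\alpha\cdot f(\alpha-1)$.

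The engine of the induction is the single reduction available to us: if a vertex $w$ is non-adjacent (in the underlying graph) to every vertex of a set $X$ with $w\notin X$, then $\alpha(D[X])\le\alpha(D)-1$, since any independent set in $X$ can be enlarged by $w$. Concretely I would fix a maximum independent set $T=\{v_1,\dots,v_\alpha\}$ and place all of $T$ into the dominating set; this is the additive term $\alpha$. A vertex $u\notin T$ is either dominated by some $v_i$ (there is an edge $(v_i,u)$), in which case we are done with $u$, or, by maximality of $T$, it is adjacent to some $v_i$ but only via edges of the form $(u,v_i)$. The vertices that remain undominated are then distributed into classes according to a $v_i$ to which they are \emph{non-adjacent}: assigning each such $u$ to one class $X_i$ consisting of vertices non-adjacent to $v_i$, the reduction above gives $\alpha(D[X_i])\le\alpha-1$, and induction dominates each $X_i$ with at most $f(\alpha-1)$ vertices. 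Summing yields $\gamma(D)\le\alpha+\alpha f(\alpha-1)=f(\alpha)$.

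The main obstacle is precisely the set $P$ of still-undominated vertices that are adjacent to \emph{all} of $T$, necessarily via out-edges $(u,v_i)$ for every $i$; these have no non-neighbour in $T$, so they fall outside every class $X_i$, and $D[P]$ can have independence number as large as $\alpha$, which would wreck the count. The way I would try to remove this obstruction is to choose $T$ not arbitrarily but as a maximum independent set that is \emph{not dominated by any single vertex}, i.e. with no $u$ satisfying $T\subseteq N^+(u)$; for such a $T$ the set $P$ is empty. Establishing that a suitable $T$ exists, using clique-acyclicity in the form that every triangle, and hence every clique, of $D$ is transitively (linearly) oriented, is where the real work lies. One must also confront the case in which no such maximum independent set is available: this forces the presence of strongly dominating vertices (in the extreme, a vertex out-pointing to the whole graph, which already gives $\gamma(D)=1$), and these should be handled by a separate reduction that decreases $\alpha$. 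Getting this case analysis to close cleanly, while keeping the total within $\alpha+\alpha f(\alpha-1)$, is the crux of the argument.
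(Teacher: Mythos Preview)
Your inductive structure matches the paper's exactly: pick an independent set $T$ of size at most $\alpha$, put it in the dominating set, and dominate the leftover vertices by partitioning them into at most $\alpha$ pieces, each non-adjacent to some fixed $v_i\in T$ and hence of independence number at most $\alpha-1$. The trouble, as you correctly isolate, is the set $P$ of undominated vertices that send an edge to \emph{every} element of $T$: they have no non-neighbour in $T$ and refuse to be distributed.

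Your proposed fix --- choose $T$ to be a maximum independent set that is not dominated by any single vertex, or fall back on ``strongly dominating vertices'' otherwise --- is not actually carried out, and I do not see how to complete it within the stated budget. Even when every maximum independent set \emph{is} singly dominated, there need not be a vertex dominating the whole digraph, and it is unclear what ``separate reduction that decreases $\alpha$'' you have in mind; nothing in the hypotheses forces $\alpha$ to drop after deleting a dominator and its closed out-neighbourhood. This is a genuine gap: the existence of an independent set with empty $P$ is precisely the crux, and you have left it as a hope.

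The paper sidesteps the difficulty by taking $T$ to be not a maximum independent set but a \emph{semi-kernel}: an independent set $U$ from which every vertex is reachable by a directed path of length at most two (the Chv\'atal--Lov\'asz theorem guarantees one always exists). Then any $w\notin \hat N_+(U)$ is reached from some $u\in U$ via $u\to x\to w$; since $(u,w)\notin E(D)$ by the choice of $w$, and $(w,u)\in E(D)$ would create the cyclic triangle $u,x,w$, the vertices $u$ and $w$ are non-adjacent. Thus every undominated vertex automatically has a non-neighbour in $U$, the obstructing set $P$ is empty for free, and your partition into $|U|\le\alpha$ classes of independence number at most $\alpha-1$ goes through unobstructed. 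The semi-kernel is exactly the missing device that makes your plan close.
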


Apart from the obvious case $\alpha(D)=1$ (when $D$ is a transitive tournament) we know the best possible bound only for $\alpha(D)=2$.

\begin{theorem}\label{triv2}
If $D$ is a clique-acyclic digraph with $\alpha(D)=2$, then $\gamma(D)\le 3$.
\end{theorem}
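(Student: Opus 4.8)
The plan is to exhibit a dominating set of at most three vertices, each out-dominating one of three natural parts of $V(D)$. For a vertex $v$ write $N^+(v)$, $N^-(v)$ for its out- and in-neighbourhood, and let $\overline N(v)$ be the set of vertices joined to $v$ by no arc in either direction, so that $V(D)=\{v\}\cup N^+(v)\cup N^-(v)\cup\overline N(v)$ and $v$ out-dominates exactly $\{v\}\cup N^+(v)$. The first ingredient is a structural observation: since $\alpha(D)=2$, any two vertices of $\overline N(v)$ are adjacent (else they would form a transversal independent triple together with $v$), so $\overline N(v)$ induces a tournament; as $D$ is clique-acyclic this tournament is transitive, and its source out-dominates all of $\overline N(v)$. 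Thus $\overline N(v)$ is always dominated by a single vertex.

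The heart of the argument is the claim that \emph{some vertex $x$ has $N^-(x)$ inducing a clique}. Granting this, $N^-(x)$ is itself a transitive tournament whose source $p$ out-dominates it; letting $q$ be the source of the transitive tournament $\overline N(x)$ and using $x$ itself, the set $\{x,p,q\}$ out-dominates $\{x\}\cup N^+(x)\cup N^-(x)\cup\overline N(x)=V(D)$, so $\gamma(D)\le 3$ (and $\gamma(D)\le 2$ if $N^-(x)=\emptyset$). So the whole theorem reduces to locating such an $x$.

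To find $x$ I would first pass to a source strong component $S$ of $D$, that is, a strong component receiving no arc from outside. Since every arc into a vertex of $S$ originates in $S$, a vertex whose in-neighbourhood is a clique inside $D[S]$ has the same (clique) in-neighbourhood in $D$; hence it suffices to treat $D[S]$. If $|S|=1$ then that vertex is a global source with empty in-neighbourhood and we are done. Otherwise $D[S]$ is strongly connected, clique-acyclic and has $\alpha\le 2$, so it contains a directed cycle. A shortest directed cycle is chordless, because a chord (oriented either way) splits off a strictly shorter directed cycle, degenerating to a forbidden directed triangle in the extreme case; thus it is an induced $C_m$, and from $\alpha(C_m)=\lfloor m/2\rfloor\le 2$ together with the absence of directed triangles (and of $2$-cycles) we get $m\in\{4,5\}$.

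The step I expect to be the main obstacle is the final local analysis on this short induced cycle $c_0\to c_1\to\cdots\to c_{m-1}\to c_0$. Every other vertex of $S$ is forced, by $\alpha\le 2$, to be adjacent to at least one endpoint of each of the two ``diagonal'' non-edges of the cycle, and, by clique-acyclicity, to orient its arcs to the cycle so as to create no directed triangle; the task is to show that these constraints pin some cycle vertex (or a vertex adjacent to it) down to having pairwise adjacent in-neighbours. Checking that this case analysis always terminates in a vertex with clique in-neighbourhood — rather than looping indefinitely — is the delicate point. That the conclusion cannot be strengthened is shown by the directed $C_5$, where every in-neighbourhood is a single vertex yet $\gamma=3$; this example also indicates why no structure cheaper than the three sources above can be guaranteed.
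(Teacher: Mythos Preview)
Your reduction is sound: if some vertex $x$ has $N^-(x)$ inducing a clique, then $\{x,\ \text{source}(N^-(x)),\ \text{source}(\overline N(x))\}$ is a dominating triple, and restricting to a source strong component is legitimate since such a component receives no arcs from outside. The argument that a shortest directed cycle in that component is an induced $C_4$ or $C_5$ is also correct. But the proposal is, by your own admission, incomplete: the central structural claim---that \emph{some} vertex has a clique in-neighbourhood---is never established. You flag the ``local analysis'' on the short cycle as ``the delicate point'' and leave it undone. Without it there is no proof, and nothing you have written rules out the possibility that every candidate vertex you inspect again has two non-adjacent in-neighbours elsewhere in the graph; anchoring at the $C_4$ or $C_5$ gives no evident well-founded descent.

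The paper sidesteps this claim entirely. Its argument is by induction on $|V(D)|$: delete an arbitrary vertex $p$, obtain a $3$-element dominating set for $D\setminus\{p\}$ by induction, and if none of those three lies in $N^-(p)$, build a fresh dominating set $\{p,q,r\}$. Here $q$ is the source of the transitive tournament on $\overline N(p)$, and the crux is to show that every still-undominated vertex $u$ (necessarily in $N^-(p)$) is non-adjacent to $q$, so that these leftovers form a transitive tournament handled by a single further vertex $r$. The inductive $3$-set is used only as a tool for this step, not as part of the answer: the dominator $k$ of $u$ cannot lie in $N^-(p)$ (by hypothesis) nor in $N^+(p)$ (else $p,k,u$ is a cyclic triangle), so $k\in\overline N(p)\subseteq N^+(q)$; then $(q,u)$ is excluded since $u$ is undominated and $(u,q)$ is excluded since $q,k,u$ would be cyclic. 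This is short and self-contained, whereas your route---even if the in-neighbourhood claim is true---still owes a genuine argument that you have not supplied.
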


Note that Theorem~\ref{triv2} is sharp as shown by the cyclically oriented pentagon. Moreover, the union of $t$ vertex disjoint cyclic pentagons shows that we can have $\alpha(D)=2t$ and $\gamma(D)=3t$. Thus in case a linear upper bound would be valid at least in the special case of clique-acyclic digraphs, it could not be smaller than $\frac{3}{2}\alpha(D)$. There are some easy subcases though when the bound is simply $\alpha(D)$.

\begin{prop} \label{perfect}
If $D$ has an acyclic orientation or $D$ is a clique-acyclic perfect graph then $\gamma(D)\le \alpha(D)$.
\end{prop}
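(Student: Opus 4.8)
The statement splits into two independent implications, and I would prove each separately, aiming in both cases for an \emph{independent} out-dominating set of size at most $\alpha(D)$.

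For the acyclic case the plan is induction on $|V(D)|$. A finite acyclic digraph has a source, i.e.\ a vertex $r$ with in-degree $0$; I would put $r$ into the dominating set, so that $r$ dominates itself together with all of its out-neighbors $N^+(r)$. Deleting $r\cup N^+(r)$ leaves an acyclic digraph $D'$, to which induction applies, yielding an independent out-dominating set $U'$ with $|U'|\le\alpha(D')$. The key observation is that $r$ is non-adjacent in the underlying graph to every remaining vertex: it has no in-arc at all, while each of its out-arcs ends in the deleted set $N^+(r)$. Hence $U=\{r\}\cup U'$ is again independent, and the same observation shows $\alpha(D)\ge\alpha(D')+1$, so that $|U|\le 1+\alpha(D')\le\alpha(D)$. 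Because the arcs witnessing domination by $U'$ all lie inside $D'$ and therefore in $D$, the set $U$ out-dominates all of $V(D)$, giving $\gamma(D)\le\alpha(D)$.

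For the clique-acyclic perfect case I would exploit the complementary structure of perfect graphs together with acyclicity inside cliques. Since the underlying graph $G$ is perfect, so is its complement $\bar G$ by the perfect graph theorem, whence the clique cover number of $G$ equals $\chi(\bar G)=\omega(\bar G)=\alpha(G)$. Thus $V(D)$ partitions into $\alpha(D)$ cliques $K_1,\dots,K_{\alpha(D)}$. On each $K_i$ the digraph $D$ induces a tournament, and clique-acyclicity (a tournament with no cyclic triangle is transitive) makes this tournament transitive, so it has a unique vertex $s_i$ of full out-degree within $K_i$. That vertex out-dominates all of $K_i$, and therefore $U=\{s_1,\dots,s_{\alpha(D)}\}$ out-dominates $V(D)$ with $|U|\le\alpha(D)$.

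I expect the only genuine subtlety to be in the acyclic case: a naive greedy choice of sources need not be independent, since two vertices selected in different rounds may still be joined by a backward arc, and then the bound $\alpha(D)$ could be exceeded. The induction above is designed precisely to avoid this, as a source having \emph{no} in-arc simultaneously guarantees independence of the enlarged set and the strict increase $\alpha(D')<\alpha(D)$ that keeps the count at $\alpha(D)$. In the perfect case the whole argument rests on the identity $\bar\chi(G)=\alpha(G)$, so the only point requiring care is the correct appeal to perfection of the complement; once the clique partition is in hand, reading off the dominating vertex of each transitive tournament is immediate.
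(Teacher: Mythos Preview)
Your proof is correct and matches the paper's approach almost exactly: the perfect case is identical (clique cover of size $\alpha$ via the perfect graph theorem, then one dominating vertex per transitive tournament), and in the acyclic case both arguments repeatedly take sources and delete their closed out-neighborhoods. The only cosmetic difference is that you pick one source at a time via induction, whereas the paper takes all current sources in each round and then simply observes that the resulting union is independent.
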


Note that Proposition~\ref{perfect} is sharp in the sense that every
graph $G$ has a clique-acyclic orientation resulting in digraph $D$
with $\gamma(D)=\alpha(G)=\alpha(D)$. Indeed, an acyclic orientation
of $G$ where every vertex of a fixed maximum independent set has
indegree zero shows this. It is worth noting the interesting result
of Aharoni and Holzman \cite{AH} stating that a clique-acyclic
digraph always has a fractional kernel, i.e., a fractional
independent set, which is also fractionally dominating.
\medskip

We will see in Section~\ref{proofs} from the proof of
Theorems~\ref{main} and \ref{main12} that the dominating sets we
find there contain two kinds of partite classes. The first kind
could be substituted by just one vertex in it, while the second kind
is chosen not so much to dominate others but because it is itself
not dominated by others. That is, apart from a bounded number of
exceptional partite classes we will dominate the rest of our digraph
with a bounded number of vertices. In Section~\ref{concl} we will
prove another theorem showing that the exceptional classes are
indeed needed.

\subsection{Application to Gallai colorings}\label{secgall}

Recall that Gallai colorings are originally defined as edge-colorings of complete graphs where no triangle gets three different colors. As already mentioned earlier, one of the basic properties of Gallai colorings is that at least one color spans a connected subgraph, i.e. forms a component covering all vertices of the underlying complete graph. In \cite{GYS1} the notion was extended to arbitrary graphs and it was proved that in this setting there is still a large monochromatic connected component. More precisely the following was proved.

\begin{theorem}\label{largecomp} {\rm (\cite{GYS1})}
Suppose that the edges of a graph $G$ are colored so that no triangle is colored with three distinct colors. Then there is a monochromatic component in $G$ with at least ${|V(G)|\over \alpha^2(G)+\alpha(G)-1}$ vertices.
\end{theorem}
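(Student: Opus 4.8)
The plan is to prove Theorem~\ref{largecomp} by relating the monochromatic connected components of the Gallai coloring to a dominating set in an auxiliary digraph, thereby using our main digraph result (Theorem~\ref{main}) as the engine. First I would build a digraph $D$ whose partite classes correspond to the monochromatic components of the coloring: for each color and each connected component of that color class, introduce a partite class, and place all vertices of $V(G)$ lying in that component into it (a single vertex of $G$ may belong to several components, once per incident color, so strictly speaking the partite classes live over labeled copies). I would then orient edges of $D$ so that a component $C$ sends an arc to a component $C'$ precisely when $C$ can be used to ``reach'' or dominate a vertex missed by $C'$; the key is to set up the orientation so that the no-$3$-colored-triangle condition on $G$ translates exactly into the absence of cyclic triangles in $D$.

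The crucial structural step is verifying two things about this auxiliary digraph. First, that the Gallai condition (no triangle with three distinct colors) forces $D$ to contain no cyclic triangle: a cyclic triangle among three components would have to be witnessed by three vertices of $G$ forming a rainbow triangle, contradicting the coloring assumption. Second, that a transversal independent set in $D$ of size $\beta$ yields $\beta$ pairwise ``incomparable'' components, and I would bound $\beta(D)$ in terms of $\alpha(G)$ — I expect $\beta(D) \le \alpha(G)$ because an antichain of components corresponds to an independent set in $G$. Once these are in place, Theorem~\ref{main} gives a dominating set of $D$ of size at most $h(\beta(D)) \le h(\alpha(G))$, i.e.\ a bounded number of monochromatic components whose union, via the domination relation, covers $V(G)$.

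Having covered $V(G)$ by at most $h(\alpha(G))$ monochromatic components, a pigeonhole argument finishes the proof: the largest of them has at least $|V(G)|/h(\alpha(G))$ vertices, giving a monochromatic component of size linear in $|V(G)|$. I would then have to check that the constant produced this way can be sharpened to the claimed denominator $\alpha^2(G)+\alpha(G)-1$; the generic $h(\alpha)$ coming from the factorial recurrence is far too large, so this final quantitative step requires a more careful, direct counting argument tailored to components rather than the blunt application of Theorem~\ref{main}.

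The hard part will be establishing the precise denominator $\alpha^2(G)+\alpha(G)-1$ rather than merely \emph{some} constant depending on $\alpha(G)$. The clean reduction to Theorem~\ref{main} proves the qualitative statement (a linear-size monochromatic component exists), but matching the sharp bound almost certainly needs an independent combinatorial analysis of how the $\alpha(G)$ parameter controls the number and overlap of monochromatic components directly, without passing through the lossy factorial bound $h(\beta)$. I would therefore treat this theorem as proved in two layers: a soft existence proof via the digraph machinery, and a separate tight estimate establishing the stated denominator.
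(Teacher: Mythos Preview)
Theorem~\ref{largecomp} is not proved in this paper at all: it is quoted from \cite{GYS1} as a known result and serves only as motivation. So there is no ``paper's own proof'' to compare with, and any attempt to derive it from Theorem~\ref{main} is logically backwards relative to the paper's narrative (Theorem~\ref{main} is the new contribution here, inspired by Theorem~\ref{largecomp}, not a tool to recover it).

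Beyond that, your proposed reduction has real gaps. The digraph you sketch is not well defined: ``$C$ sends an arc to $C'$ when $C$ can be used to reach or dominate a vertex missed by $C'$'' is not an orientation rule one can check, and there is no reason the Gallai condition should translate into acyclicity of triangles under it. Your justification that a cyclic triangle of components ``would have to be witnessed by three vertices of $G$ forming a rainbow triangle'' is an assertion, not an argument. Likewise, the claim $\beta(D)\le\alpha(G)$ because ``an antichain of components corresponds to an independent set in $G$'' is unsupported: pairwise incomparable monochromatic components need not produce pairwise nonadjacent vertices. Compare this with how the paper actually builds the auxiliary digraph in the proof of Theorem~\ref{gall}: one fixes a vertex $v$, takes the partite classes to be the colour classes $A_i$ of edges at $v$, and orients every edge between $A_i$ and $A_j$ according to its colour. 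That construction makes both the no-cyclic-triangle property and the bound $\beta(D)\le\alpha(G)$ immediate, which your component-based construction does not.

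Finally, you yourself note that the denominator coming out of $h(\alpha)$ is factorial, not $\alpha^2+\alpha-1$, and you defer the sharp bound to an unspecified ``separate tight estimate''. That is precisely the content of Theorem~\ref{largecomp}, so in effect the proposal proves only the qualitative statement (already the content of Theorem~\ref{gall}) and leaves the actual theorem unproved.
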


Another, in a sense stronger possible generalization of the above basic property of Gallai colorings is also suggested by Theorem \ref{largecomp}, this was asked first at a workshop at Fredericia in November, 2009.

\begin{problem}\label{pro}
Suppose that the edges of a graph $G$ are colored so that no triangle is colored with three distinct colors. Is it true that the vertices of $G$ can be covered by the vertices of at most $k$ monochromatic components where $k$ depends only on $\alpha(G)$?
\end{problem}

\noindent
We remark that an example in \cite{GYS1} shows that even if the $k$ of Problem~\ref{pro} exists, it must be at least ${c\alpha^2(G)\over \log{\alpha(G)}}$ where $c$ is a small constant.

\medskip
\noindent
Theorem~\ref{main} implies an affirmative answer to Problem \ref{pro}. Let $g(1)=1$ and for $\alpha\ge 2$, let $g(\alpha)=g(\alpha-1)+h(\alpha)$ where $h$ is the function given by Theorem \ref{main}.

In the sequel we will use the notation $G[A]$ that denotes the subgraph of graph $G$ induced by $A\subseteq V(G)$.

\begin{theorem}\label{gall}
Suppose that the edges of a graph $G$ are colored so that no triangle is colored with three distinct colors. Then the vertices of $G$ can be covered by the vertices of at most $g(\alpha(G))$ monochromatic components. In case $\alpha(G)=2$ at most five components are enough.
\end{theorem}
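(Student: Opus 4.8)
The plan is to reduce the coloring statement to the digraph domination result (Theorem~\ref{main}) by building an auxiliary multipartite digraph whose partite classes are the vertex sets of monochromatic components, and whose transversal independence number is controlled by $\alpha(G)$. The key observation is that the Gallai-type condition on triangles of $G$ translates into a \emph{no-cyclic-triangle} condition on the auxiliary digraph, so that Theorem~\ref{main} applies directly.

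\begin{proof}[Proof sketch]
Fix a color, say red, and consider the red components $C_1,\dots,C_t$ of $G$. These partition $V(G)$ (every isolated-in-red vertex forms its own singleton red component). I would like to make the $C_i$ the partite classes of a digraph, but to invoke Theorem~\ref{main} I need the classes to be independent in the digraph and I need a clean domination notion, so the construction must specify the arcs between classes. The natural choice is to draw arcs according to which color dominates between two components: for components $C_i,C_j$, look at the colors appearing on the edges of $G$ between them; the absence of $3$-colored triangles forces a strong structural restriction on how colors behave between and inside components, which is exactly what prevents cyclic triangles in the contracted picture.

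Concretely, I would first argue that after contracting each monochromatic (say red) component to a single super-vertex, the edges \emph{between} distinct red components are colored with colors other than red and inherit the Gallai property, so by induction on $\alpha$ the contracted graph can be handled by $g(\alpha-1)$ components—this is the source of the term $g(\alpha-1)$ in the recurrence $g(\alpha)=g(\alpha-1)+h(\alpha)$. The remaining task is to cover, within a single color's component structure, the vertices using few classes; this is where the digraph $D$ with partite classes = components and the no-cyclic-triangle property enters, yielding the additive $h(\alpha)$ term via Theorem~\ref{main}. I would set up $D$ so that $\beta(D)\le\alpha(G)$: a transversal independent set in $D$ (one vertex from each of several distinct components, pairwise nonadjacent in $D$) should correspond to an independent set of size at most $\alpha(G)$ in $G$, because nonadjacency in $D$ between component-representatives must encode genuine non-domination, hence independence, in $G$.

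The main obstacle, and the heart of the argument, is verifying two things simultaneously: (i) that the arc orientation extracted from the coloring genuinely produces \emph{no cyclic triangle} in $D$—this is where the hypothesis that no triangle of $G$ is $3$-colored is used in full force, translating a forbidden color pattern on a triangle of $G$ into a forbidden cyclic orientation on the corresponding triple of classes; and (ii) that a dominating set of size $h(\beta)$ in $D$, as guaranteed by Theorem~\ref{main}, pulls back to a covering of $V(G)$ by $h(\alpha(G))$ monochromatic components, i.e. that domination in $D$ corresponds to monochromatic reachability in $G$. Once these correspondences are pinned down, the bound $g(\alpha)=g(\alpha-1)+h(\alpha)$ follows by combining the inductive contraction step with the single application of Theorem~\ref{main}, and the improved bound of five components when $\alpha(G)=2$ follows from $g(2)=g(1)+h(2)=1+4=5$ using the sharper estimate $h(2)\le 4$ from Theorem~\ref{main12}.
\end{proof}
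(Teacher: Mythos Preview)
Your proposal takes a different route from the paper and, as written, has a genuine gap in the inductive step. You want to contract the red components and then handle the contracted graph by induction with parameter $\alpha-1$, but contracting monochromatic components need not decrease the independence number: if the chosen color happens not to appear on any edge, the contraction is the identity and $\alpha$ is unchanged. Nothing in your outline forces the drop from $\alpha$ to $\alpha-1$ that the recurrence $g(\alpha)=g(\alpha-1)+h(\alpha)$ requires.

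The auxiliary digraph is also underspecified. Between two distinct red components the edges of $G$ may carry many different non-red colors, and ``which color dominates'' does not give a well-defined orientation rule; without a precise arc rule you cannot verify the absence of cyclic triangles, nor can you argue that a dominating family of partite classes in $D$ pulls back to a cover by \emph{monochromatic} components of $G$ --- a red component together with the vertices it ``dominates'' in $D$ is not monochromatic in any single color.

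The paper avoids both issues by anchoring at a single vertex $v$. The non-neighbors of $v$ induce a subgraph with independence number at most $\alpha(G)-1$ (one can add $v$ to any independent set there), so induction yields the $g(\alpha-1)$ term outright. For the neighbors of $v$, the partite classes of $D$ are the color classes $A_i=\{w: vw\text{ has color }i\}$, not components of a fixed color; the Gallai condition on the triangle $v,w,w'$ forces every edge between $A_i$ and $A_j$ to have color $i$ or $j$, giving a canonical orientation (color-$i$ edges leave $A_i$) that immediately forbids cyclic triangles. A transversal independent set of $D$ is then an honest independent set of $G$, so $\beta(D)\le\alpha(G)$, and a dominating class $A_i$ together with $v$ and everything it dominates is connected in color $i$. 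This supplies the additional $h(\alpha(G))$ components, and the case $\alpha(G)=2$ follows from $g(2)=g(1)+h(2)\le 1+4=5$ via Theorem~\ref{main12}.
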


\noindent
Note that the last statement of Theorem~\ref{gall} generalizes Theorem \ref{largecomp} in the $\alpha(G)=2$ case.

\medskip
\begin{proof}
For $\alpha(G)=1$ the result is obvious by Gallai's theorem. For $\alpha(G)\ge 2$, suppose that $v\in V(G)$ and let $X$ be the set of vertices in $G$ that are not adjacent to $v$. By induction, the subgraph $G[X]$ can be covered by the vertices of $g(\alpha(G)-1)$ monochromatic components. Let $t$ be the number of colors used on edges of $G$ incident to $v$ and let $A_i$ be the set of vertices incident to $v$ in color $i$. Observe that the condition on the coloring implies that edges of $G$ between $A_i,A_j$ are colored with either color $i$ or color $j$ whenever $1\le i <j\le t$. Thus orienting all edges of color $i$ outward from $A_i$ for every $i$, all edges of $G$ between different classes $A_j$ are  oriented. Moreover, in this orientation there are no cyclic triangles. Thus Theorem \ref{main} is applicable to the oriented subgraph $H$ spanned by the union of the classes $A_j$ after the edges inside the $A_j$'s are removed. We obtain at most $h(\alpha(G))$ dominating sets $A_i$ and each set $v\cup A_i$ together with the vertices that $A_i$ dominates form a connected subgraph of $G$ in color $i$. Thus all vertices of $G$ can be covered by at most $g(\alpha(G)-1)+h(\alpha(G))=g(\alpha(G))$ connected components. In case of $\alpha(G)=2$ we can use Theorem \ref{main12} to get a covering with at most five monochromatic components.
\end{proof}

\section{Proofs}\label{proofs}

We will use the following notation throughout. If $D$ is a digraph and
$U\subseteq V(D)$ is a subset of its vertex set then $N_+(U)=\{v\in V(D):
\exists u\in U\ (u,v)\in E(D)\}$ is the {\em outneighborhood} of $U$. The {\em
  closed outneighborhood} $\hat N_+(U)$ of $U$ is meant to be the set $U\cup N_+(U)$. When $U=\{u\}$ is a single vertex we also write $N_+(u)$ and $\hat N_+(u)$ for $N_+(U)$ and $\hat N_+(U)$, respectively. When $(u,v)\in E(D)$, we will often say that $u$ {\em sends an edge to} $v$.

\medskip
We first deal with the case $\beta(D)=1$ and prove the first statement of Theorem~\ref{main12}. As it will be used several times later, we state it separately as a lemma.

\begin{lemma}\label{compl}
Let $D$ be a multipartite digraph with no cyclic triangle. If $\beta(D)=1$ then $k(D)=1$.
\end{lemma}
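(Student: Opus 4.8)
The plan is to reduce the statement to an extremal argument on the partite classes, using the classical fact that a tournament containing no cyclic triangle is transitive (and hence acyclic).

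First I would reformulate what it means for a single class to be a dominating set. Since $\beta(D)=1$, any two vertices lying in \emph{different} partite classes are joined by an edge. Hence a class $A_i$ fails to dominate precisely when some vertex $v\notin A_i$ receives no edge from $A_i$; then for every $w\in A_i$ the edge between $v$ and $w$ (which exists) must point the other way, so $v$ sends an edge to \emph{every} vertex of $A_i$. Call such a class \emph{bad}, witnessed by $v$. Thus it suffices to produce a class that is not bad. Before the main argument I would dispose of digons (pairs $u,v$ with both $(u,v),(v,u)\in E(D)$) by deleting one edge of each digon to obtain an oriented subgraph $D'$. Deleting edges cannot create a cyclic triangle, nor can it enlarge a transversal independent set (a pair made non-independent by the deleted edge is still joined by the surviving one), so $D'$ is an oriented multipartite digraph with $\beta(D')=1$ and no cyclic triangle; and since every edge of $D'$ is an edge of $D$, a dominating class of $D'$ dominates in $D$. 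Hence I may assume $D$ has no digons.

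Now suppose, for contradiction, that every class is bad. Define a relation on the classes by $C\prec C'$ if some vertex of $C'$ sends an edge to every vertex of $C$. Badness says each class $C$ satisfies $C\prec C'$ for some $C'\neq C$, so every class has a $\prec$-successor; following successors within the finite family of classes yields a directed cycle of $\prec$, and I take a shortest one $C_0\prec C_1\prec\cdots\prec C_{r-1}\prec C_0$, whose classes are then pairwise distinct (in particular $r\ge 3$, since a $2$-cycle would force a digon). Choosing a witness $v_k\in C_k$ for each step, the statement ``$v_k$ sends an edge to all of $C_{k-1}\ni v_{k-1}$'' yields the edges $v_k\to v_{k-1}$, together with $v_0\to v_{r-1}$ from the wrap-around — that is, a genuine directed cycle through $v_0,\dots,v_{r-1}$.

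Finally, these $r$ vertices lie in pairwise distinct classes, so (as $\beta(D)=1$ and $D$ is now oriented) they induce a tournament, which inherits the no-cyclic-triangle property; but such a tournament is transitive, hence acyclic, contradicting the directed cycle just exhibited. Therefore some class is not bad, i.e.\ it dominates, giving $k(D)=1$. The points needing care are exactly the digon reduction and the verification that the extracted cycle uses pairwise distinct classes, so that the transitivity fact applies; I expect the main conceptual step to be the reformulation of ``bad'' and the recognition that closing up the chain of witnesses forces a directed cycle on a complete (tournament) subgraph, which is where the forbidden cyclic triangle finally bites.
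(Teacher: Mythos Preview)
Your argument is correct, but it takes a genuinely different route from the paper's. The paper makes a single extremal choice: it picks a partite class $K$ maximising $|\hat N_+(K)|$ and shows directly that if some $l\in L$ were undominated by $K$, then (using one application of the no-cyclic-triangle hypothesis) every outneighbour of $K$ outside $L$ is also an outneighbour of $l$, whence $|\hat N_+(L)|>|\hat N_+(K)|$, a contradiction. Your proof instead encodes ``failure to dominate'' as a relation $\prec$ on classes, chases a shortest $\prec$-cycle, and extracts a directed cycle inside the tournament induced on the witnesses, contradicting the classical fact that a tournament with no cyclic triangle is transitive. Both are valid; the paper's version is shorter, avoids the digon reduction, and has the extra feature of identifying \emph{which} class dominates (the one with largest closed outneighbourhood), a piece of information that is reused in the proof of the stronger Lemma~\ref{eros}. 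Your approach, on the other hand, makes the role of transitivity explicit and would generalise to any setting where the induced ``transversal'' tournaments are acyclic.
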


\begin{proof}
Let $K$ be a partite class for which $|\hat N_+(K)|$ is largest. We
claim that $K$ is a dominating set. Suppose indirectly, that there
is a vertex $l$ in a partite class $L\neq K$, which is not dominated
by $K$. Since all edges between distinct partite classes are present
in $D$ with some orientation, $l$ must send an edge to all vertices
of $K$. Furthermore, if a vertex $m$ in a partite class $M\neq K,L$
is an outneighbor of some $k\in K$ then it is also an outneighbor of
$l$, otherwise $m$, $l$ and $k$ would form a cyclic triangle. Thus
$\hat N_+(K)\subseteq \hat N_+(L)$. Moreover, $l\in \hat
N_+(L)\setminus \hat N_+(K)$, so $|\hat N_+(L)|>|\hat N_+(K)|$
contradicting the choice of $K$. This completes the proof of the
lemma.

\begin{center}
\includegraphics[scale=0.8]{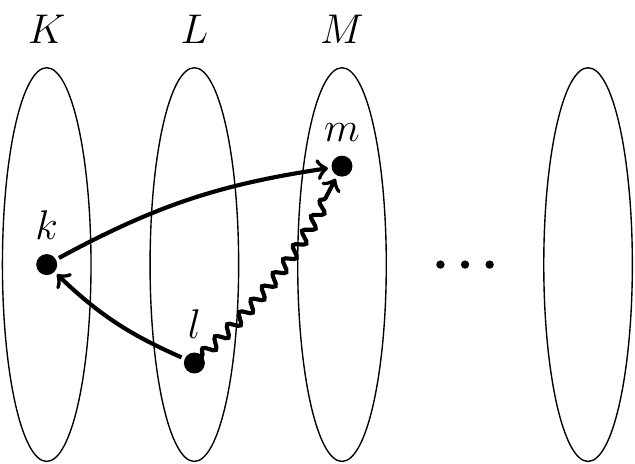}
\end{center}

\end{proof}

In the following two subsections we prove Theorems~\ref{main12} and \ref{main}, respectively.

\subsection{At most $2$ independent vertices}

To prove the second statement of Theorem~\ref{main12} we will need the following stronger variant of Lemma~\ref{compl}.

\begin{lemma}\label{eros}
Let $D$ be a multipartite digraph with no cyclic triangle and $\beta(D)=1$. Then there is a partite class $K$ which is a dominating set, and there is a vertex $k\in K$ such that $V(D)\setminus (K\cup L)\subseteq N_+(k)$ for some partite class $L\neq K$.
\end{lemma}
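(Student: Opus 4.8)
The plan is to first reformulate the statement in a way that makes the coupling between its two demands transparent. Since $\beta(D)=1$, every pair of vertices from different classes is adjacent, so for a vertex $k$ in a class $K$ the vertices of $V(D)\setminus K$ that are \emph{not} in $N_+(k)$ are precisely those that send an edge to $k$. Hence requiring $V(D)\setminus(K\cup L)\subseteq N_+(k)$ for some class $L$ is exactly the same as requiring that all in-neighbours of $k$ (the vertices sending an edge to $k$) lie in a single partite class $L$. Moreover, if such a $k$ lives in a class $K$ that happens to be a dominating set, the whole lemma follows at once: $K$ dominates $L$ because it dominates everything outside itself, while $k$ alone dominates $V(D)\setminus(K\cup L)$. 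So the goal reduces to producing a dominating class together with one of its vertices whose in-neighbourhood is confined to one class.

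My main tool will be a single swap inequality coming from the forbidden cyclic triangles, and a global extremal choice. First I would take $k$ to be a vertex of $D$ with $|\hat N_+(k)|$ maximum, breaking ties by choosing $k$ to have as few in-neighbours as possible, and let $K$ be its class. The swap step is this: if $u$ sends an edge to $k$ and $x\in N_+(k)$ lies outside the class $C(u)$ of $u$, then the triangle on $\{u,k,x\}$ cannot be cyclic, so $u$ sends an edge to $x$. Thus $N_+(u)\supseteq (N_+(k)\setminus C(u))\cup\{k\}$, which gives $|\hat N_+(u)|\ge |\hat N_+(k)|+1-|N_+(k)\cap C(u)|$ for \emph{every} in-neighbour $u$ of $k$. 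By maximality this forces $|N_+(k)\cap C(u)|\ge 1$: every class meeting the in-neighbourhood of $k$ also receives an out-edge from $k$. This already delivers the dominating-class property up to $L$. Indeed, if some $v\notin K$ sends an edge to all of $K$ then in particular it sends an edge to $k$, so $v$ is an in-neighbour of $k$; hence the only class that could contain a vertex undominated by $K$ is the in-neighbour class $L$, and any undominated $\ell\in L$, sending an edge to $k$, would satisfy the same inequality and (outside the mixed case below) contradict maximality.

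It remains to force the in-neighbours of the extremal $k$ into one class. Here I would use that the in-neighbours of $k$, pairwise adjacent across classes and free of cyclic triangles, induce a transitive tournament; if they met two classes there would be a source $p$ sending an edge to $k$ and to every other in-neighbour. Since the in-neighbours outside $C(p)=P$ are new elements of $N_+(p)$ beyond $N_+(k)\setminus P$ and beyond $k$, the inequality above sharpens to $|\hat N_+(p)|\ge |\hat N_+(k)|+2-|N_+(k)\cap P|$, so maximality pushes $|N_+(k)\cap P|\ge 2$, and rerunning the argument on each in-neighbour class yields increasingly rigid constraints. This \emph{mixed} configuration — where $k$ simultaneously receives edges from and sends edges into its own in-neighbour classes — is exactly where the easy cardinality contradiction stalls, and I expect it to be the main obstacle. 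The plan to break it is to extract more from the triangle condition: each in-neighbour $m$ is forced (again by forbidding cyclic triangles on $\{m,k,y\}$) to send edges to all of $k$'s out-neighbours lying in the other in-neighbour classes, so out-neighbourhoods strictly grow along the transitive order of the in-neighbours. Following this order to its source then produces a vertex with strictly larger closed out-neighbourhood, or, when equality is forced throughout, a vertex with strictly fewer in-neighbours; either outcome contradicts the extremal choice, and the tie-break guarantees the process terminates with a vertex whose in-neighbours all lie in a single class $L$.
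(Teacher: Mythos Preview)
Your reformulation is correct and the swap inequality is fine, but the argument has two genuine gaps. First, your extremal choice of $k$ (global maximiser of $|\hat N_+(\cdot)|$, tie-broken by in-degree) need not land in a dominating class. Take $K=\{k\}$, $L=\{l_1,l_2,l_3\}$ with $(k,l_1),(k,l_2),(l_3,k)$: here $\beta=1$, there are no triangles at all, $k$ is the unique vertex with $|\hat N_+|=3$, its sole in-neighbour $l_3$ lies in $L$, and yet $K$ fails to dominate $l_3$. This is precisely your ``mixed case'' ($k$ both sends into and receives from $L$), and neither the cardinality inequality nor the tie-break produces a contradiction: $|\hat N_+(l_3)|=2<3$, so the tie-break never fires, and there is nothing to iterate on. Second, the claim that the in-neighbours of $k$ ``induce a transitive tournament'' with a single source is unjustified: in-neighbours lying in the same class are nonadjacent, so you only get a multipartite tournament, which has a dominating \emph{class} (by the $\beta=1$ lemma) but not a dominating vertex in general. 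The last paragraph is a sketch of an iteration along a ``transitive order'' that does not exist in this generality, and the example above shows that a purely cardinality-based descent cannot close the mixed case.

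The paper avoids both problems by decoupling the two tasks: it first fixes a dominating class $K$ (from Lemma~\ref{compl}, maximising $|\hat N_+(K)|$ over classes), and only then picks $k\in K$ with maximum $|N_+(k)|$. The heart of the argument is not an inequality but an \emph{equality}: if $k$ has an in-neighbour in some $L\ne K$, the dominating property of $K$ supplies a cyclic $4$-cycle $k,l_2,k_1,l_1$ between $K$ and $L$, and forbidding cyclic triangles forces all four vertices to have identical out-neighbourhoods in every third class. Repeating this with a hypothetical second exceptional class $M$ yields two such quadrangles that force the edge between $l_1$ and $m_1$ to be oriented both ways. It is this structural rigidity, not a counting or tie-breaking argument, that eliminates the second exceptional class.
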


Thus Lemma~\ref{eros} states that the dominating partite class $K$ has an element that alone dominates almost the whole of $D$, there may be only one exceptional partite class $L$ whose vertices are not dominated by this single element of $K$.

For proving Lemma~\ref{eros}, the following observations will be
used, where $X,Y,Z$ will denote partite classes.

\begin{obs}~\label{obs1}
Let $D$ be a multipartite digraph with no cyclic triangle and $\beta(D)=1$. Suppose that for vertices $x_1, x_2\in X$ and $y\in Y$ the edges $(x_2,y)$ and $(y,x_1)$ are present in $D$. Then for every $z\in Z\neq X,Y$ with $(x_1,z)\in E(D)$ we also have $(x_2,z)\in E(D)$.
\end{obs}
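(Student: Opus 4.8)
The plan is to exploit the two standing hypotheses in tandem: the condition $\beta(D)=1$ forces any two vertices lying in distinct partite classes to be adjacent, and the absence of cyclic triangles then pins down the orientation of the edges I do not yet control. Since $z$ lies in a class $Z$ different from both $X$ and $Y$, the vertex $z$ is adjacent to each of $x_1$, $x_2$, and $y$; the edge $(x_1,z)$ is given by hypothesis, so the only unknown orientations I must determine are those of the pairs $\{y,z\}$ and $\{x_2,z\}$. Proving $(x_2,z)\in E(D)$ will come down to forcing each of these in turn.

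First I would determine the orientation between $y$ and $z$ by looking at the triangle on $\{x_1,y,z\}$. Here I already know $(y,x_1)\in E(D)$ and $(x_1,z)\in E(D)$. If the edge between $y$ and $z$ were oriented as $(z,y)$, then $y\to x_1\to z\to y$ would be a cyclic triangle, which is forbidden. Hence the edge must be $(y,z)\in E(D)$. This is the first of the two uses of the no-cyclic-triangle assumption.

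Next I would repeat the argument on the triangle $\{x_2,y,z\}$, now using the edge $(y,z)$ just obtained together with the given edge $(x_2,y)$. If the edge between $x_2$ and $z$ were oriented as $(z,x_2)$, then $x_2\to y\to z\to x_2$ would again be a cyclic triangle. Therefore $(x_2,z)\in E(D)$, which is exactly the desired conclusion.

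I do not expect a serious obstacle here; the statement is a short local deduction rather than a genuinely hard argument. The only point that requires care is the justification, from $\beta(D)=1$, that the relevant pairs in distinct classes are adjacent at all (so that "orient to avoid a cyclic triangle" is actually a dichotomy and not a trichotomy that allows a missing edge). Once that is in place, the two applications of the cyclic-triangle ban proceed mechanically, and one should double-check that $Z\neq X,Y$ is used precisely to guarantee that $z$ is adjacent to $x_2$ and to $y$.
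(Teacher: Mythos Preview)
Your proof is correct and essentially the same as the paper's: both arguments use the two triangles $\{x_1,y,z\}$ and $\{x_2,y,z\}$ together with $\beta(D)=1$ and the ban on cyclic triangles. The only cosmetic difference is that the paper phrases it as a proof by contradiction (assume $(z,x_2)\in E(D)$ and show neither orientation of the $yz$ edge is possible), while you argue directly, first pinning down $(y,z)$ and then $(x_2,z)$.
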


\begin{proof}
Assume indirectly that for some $z\in Z$ the orientation is such that we have $(x_1,z),$ $(z,x_2)\in E(D)$. Then the edge connecting $z$ and $y$ cannot be oriented either way: $(z,y)\in E(D)$ would give a cyclic triangle on vertices $z, y, x_1$, while $(y,z)\in E(D)$ would create one on $y, z, x_2$.
\begin{center}
\includegraphics[scale=0.8]{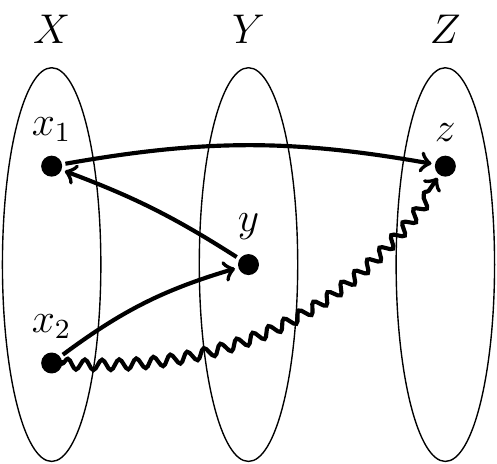}
\end{center}
\end{proof}

\begin{obs}~\label{obs2}
Let $D$ be a multipartite digraph with no cyclic triangle and $\beta(D)=1$. Suppose that for vertices $x_1, x_2\in X$ and $y_1, y_2\in Y$ the edges $(x_1,y_2)$, $(y_2, x_2)$, $(x_2,y_1), (y_1,x_1)$ are present in $D$ forming a cyclic quadrangle. Then in every partite class $Z\neq X,Y$ the outneighborhood of these four vertices is the same.
\end{obs}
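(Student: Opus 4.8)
The plan is to fix an arbitrary vertex $z$ lying in a partite class $Z\neq X,Y$ and to prove that $z$ is an outneighbor of one of the four vertices $x_1,x_2,y_1,y_2$ if and only if it is an outneighbor of all of them; since $z$ is arbitrary this gives the claim. The key point is to exploit the cyclic structure of the quadrangle $x_1\to y_2\to x_2\to y_1\to x_1$ together with the absence of cyclic triangles.

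First I would record the basic mechanism for a single edge of the quadrangle. Take any directed edge $(a,b)$ among the four given edges, so that $a$ and $b$ lie in $X$ and $Y$ (in some order), and consider the three vertices $a,b,z$, which lie in three distinct partite classes. Because $\beta(D)=1$, there is an edge between $z$ and each of $a$ and $b$. If $(b,z)\in E(D)$ then the edge between $z$ and $a$ cannot be oriented as $(z,a)$, for otherwise $a,b,z$ would form the cyclic triangle $a\to b\to z\to a$; hence $(a,z)\in E(D)$. In short, for every edge $(a,b)$ of the quadrangle we have the implication $(b,z)\in E(D)\Rightarrow (a,z)\in E(D)$.

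Next I would chain these implications around the quadrangle. Applying the mechanism to the four edges $(x_1,y_2),(y_2,x_2),(x_2,y_1),(y_1,x_1)$ in turn yields
\[
(x_1,z)\in E(D)\ \Rightarrow\ (y_1,z)\in E(D)\ \Rightarrow\ (x_2,z)\in E(D)\ \Rightarrow\ (y_2,z)\in E(D)\ \Rightarrow\ (x_1,z)\in E(D).
\]
Since this is a cyclic sequence of implications, all four statements are equivalent: $z$ is an outneighbor of one of $x_1,x_2,y_1,y_2$ exactly when it is an outneighbor of all of them. As $z\in Z$ was arbitrary and $Z\neq X,Y$ was an arbitrary partite class, the outneighborhoods of the four vertices agree in every such $Z$, which is the assertion.

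I expect the only delicate point to be the bookkeeping: one must check that for each of the four edges the implication points in the direction that makes the four implications compose into a single cycle, and that the three vertices involved always lie in distinct classes so that $\beta(D)=1$ indeed supplies the missing edge. Alternatively, one could first invoke Observation~\ref{obs1} to show that $x_1,x_2$ and $y_1,y_2$ have equal outneighborhoods in $Z$ and then bridge across the two classes by two further triangle arguments, but the single cyclic chain above seems the cleanest route.
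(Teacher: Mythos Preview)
Your argument is correct and is essentially identical to the paper's proof: the paper also fixes $z\in Z$, uses the no-cyclic-triangle condition on each edge $(a,b)$ of the quadrangle to obtain $Z\cap N_+(b)\subseteq Z\cap N_+(a)$, and then chains these four inclusions around the cycle to force equality. The only cosmetic difference is that the paper phrases the steps as set inclusions rather than as implications for a fixed $z$; your remark about bookkeeping is apt, since the edges you list are actually used in reverse order in your displayed chain, but the implications themselves are all correct.
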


\begin{proof}
Let $z$ be an element of $Z\cap N_+(x_1)$. By $(y_1,x_1)\in E(D)$ we must have $z\in Z\cap N_+(y_1)$, otherwise $y_1, x_1, z$ would form a cyclic triangle. Thus we have $Z\cap N_+(x_1)\subseteq Z\cap N_+(y_1)$. Now shifting the role of vertices along the oriented quadrangle backwards we similarly get $Z\cap N_+(x_1)\subseteq Z\cap N_+(y_1)\subseteq Z\cap N_+(x_2)\subseteq Z\cap N_+(y_2)\subseteq Z\cap N_+(x_1)$ proving that we have equality everywhere.
\begin{center}
\includegraphics[scale=0.8]{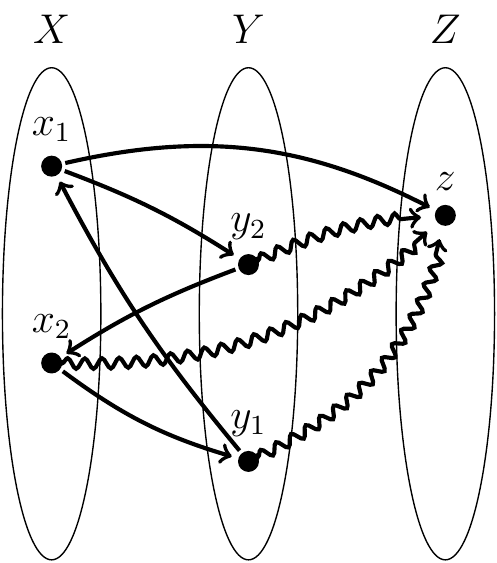}
\end{center}

\end{proof}

Note that in Observation \ref{obs2}, as $\beta (D)=1$, the inneighborhood of the vertices $x_1, x_2, y_1, y_2$ is also the same, so these vertices split to out- and inneighborhood in the same way every partite class $Z\neq X,Y$.

\begin{proof}[Proof of Lemma~\ref{eros}.]

We know from Lemma~\ref{compl} that there is a partite class $K$ which is a dominating set.

Let $k$ be an element of $K$ for which $|N_+(k)|$ is maximal. If $k$ itself dominates all the vertices not in $K$ then we are done. (In that case we do not even need an exceptional class $L$.) Otherwise, there is a vertex $l_1$ in a partite class $L\neq K$ for which the edge between $l_1$ and $k$ is oriented towards $k$. As $L\subseteq N_+(K)$, there must be a vertex $k_1\in K$ which sends an edge to $l_1$.

Using Observation~\ref{obs1} for the vertices $k$, $k_1$ and $l_1$, we obtain that $k_1$ sends an edge not just to $l_1$ but to every vertex in $N_+(k)\setminus L$. By the choice of $k$ this implies the existence of a vertex $l_2\in L$ for which $(k,l_2), (l_2,k_1)\in E(D)$. Thus the vertices $k, l_2, k_1, l_1$ form a cyclic quadrangle. Applying Observation~\ref{obs2} this implies that these four vertices have the same outneighborhood in $V(D)\setminus (K\cup L)$.

We claim that $N_+(k)$ contains all vertices of $D\setminus (K\cup L)$. Assume indirectly, that there is a vertex $m_1$ in a partite class $M\neq K,L$ which is not dominated by $k$. We can argue similarly as we did for $l_1$. Namely, since $M\subseteq N_+(K)$ there is some $k_2\in K$ (perhaps identical to $k_1$) dominating $m_1$. Applying Observation~\ref{obs1} to the vertices $k, m_1$ and $k_2$, we obtain $(N_+(k)\setminus M)\subseteq N_+(k_2)$. Then by the choice of $k$ we must have a vertex $m_2\in M$ for which $(k,m_2), (m_2,k_2)\in E(D)$. So vertices $k, m_2, k_2, m_1$ also form a cyclic quadrangle, and Observation~\ref{obs2} gives us that $Z\cap N_+(k)=Z\cap N_+(m_2)=Z\cap N_+(k_2)=Z\cap N_+(m_1)$ for all partite classes $Z\neq K,M$.

The contradiction will be that the edge between $l_1$ and $m_1$
should be oriented both ways. Indeed, since $(l_1,k)\in E(D)$ and in
$L$ the inneighbors of $k$ and $m_1$ are the same, we must have
$(l_1,m_1)\in E(D)$. However, $(m_1,k)\in E(D)$ and the fact that
$k$ and $l_1$ split $M$ in the same way implies $(m_1,l_1)\in E(D)$.
This contradiction completes the proof of the lemma.

\begin{center}
\includegraphics[scale=0.8]{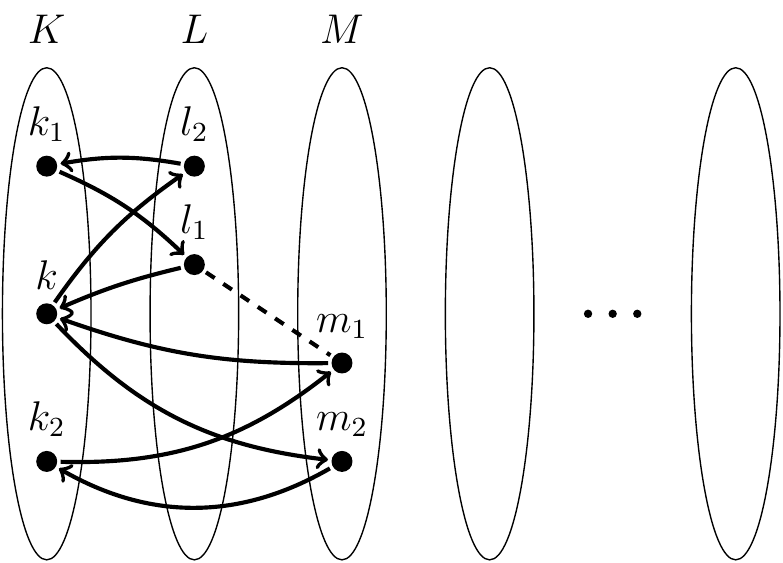}
\end{center}

\end{proof}

Now we are ready to prove the second statement of Theorem~\ref{main12}.

\begin{proof}[Proof of Theorem~\ref{main12}.]
We have already proven the first statement of the theorem. To prove the second part let $D$ be a multipartite digraph with no cyclic triangle and $\beta(D)=2$. We use induction on the number of vertices. The base case is obvious. Let $p$ be a vertex of $D$ and consider the subdigraph $\hat{D}:=D\setminus \{p\}$.

By induction $k(\hat{D})\le 4$. Let $K$, $L$, $M$ and $N$ be four
partite classes of $\hat{D}$ that form a dominating set in $\hat D$.
If $p\in \hat N_+(K\cup L\cup M\cup N)$ then we are done, the same
four sets also dominate $D$. If $p\notin \hat N_+(K\cup L\cup M\cup
N)$ then we will choose four other partite classes that will
dominate $D$. First we choose $P$, the class of $p$. We partition
every other partite class into three parts according to how it is
connected to $p$. For any class $Z$, let $Z_1$ denote the set of
vertices in $Z$ dominated by $p$, let $Z_2$ be the set of vertices
in $Z$ nonadjacent to $p$, and let $Z_3$ denote the set of remaining
vertices of $Z$, i.e., those which send and edge to $p$. We will
refer to $Z_i$ as the $i$-th part of the partite class $Z$, where
$i=1,2,3$. Note that $K_3, L_3, M_3, N_3$ are all empty, otherwise
we would have $p\in \hat N_+(K\cup L\cup M\cup N)$.

Let $D_2$ be the subdigraph of $D$ induced by the vertices in the second part
of the partite classes of $D\setminus P$ in their above partition. $D_2$ is
also a multipartite digraph with no cyclic triangle and $\beta (D_2) =1$.
The latter follows from the fact that the vertices of $D_2$ are all nonadjacent
to $p$ and $\beta(D)=2$. Thus by Lemma~\ref{compl} the vertices of $D_2$ can be
dominated by one partite class $Q_2$, the second part of some partite class $Q$ of $D$.
We choose $Q$ to be the second partite class in our dominating set. Observe that all vertices
of $D$ not dominated so far, i.e., those not in $\hat N_+(P\cup Q)$ should belong to
the third part of their partite classes. Let $u$ be such a vertex.
(If there is none, then we are done.) We know $u\notin K\cup L\cup M\cup N$ as
none of these four classes has a third part. Since $K\cup L\cup M\cup N$ is a
dominating set in $\hat{D}$ there is a vertex $k$ in one of these four classes for
which $(k,u)$ is an edge of $D$. No vertex in the first part of a class can send an
edge to a vertex lying in the third part of some other class, otherwise the latter
two vertices would form a cyclic triangle with $p$. Thus, since $K, L, M, N$
has no third parts, $k$ must be in the second part of one of them.

\begin{center}
\includegraphics[scale=0.8]{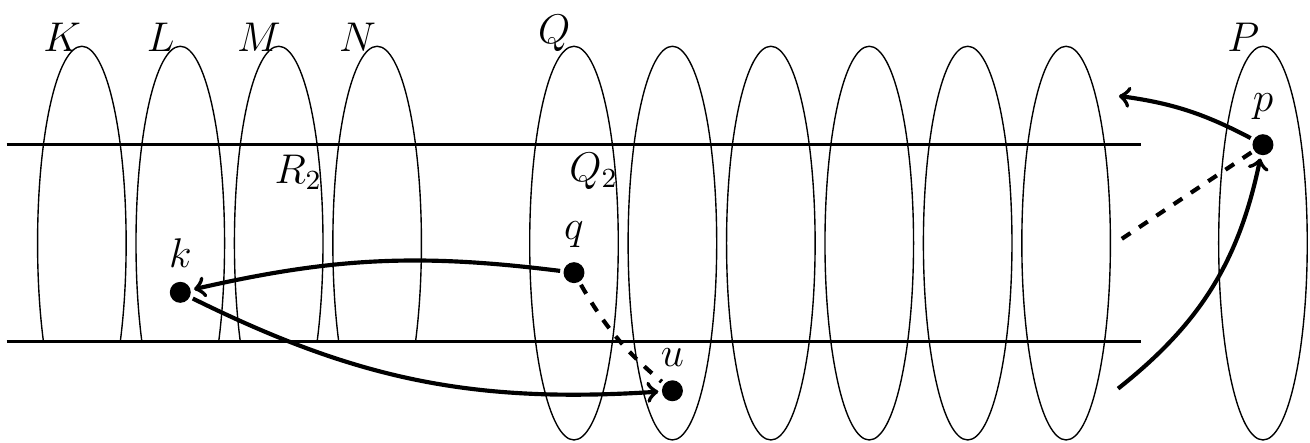}
\end{center}

Lemma~\ref{eros} implies that there is a vertex $q\in Q_2$ with
$V(D_2)\cap\hat N_+(q)$ containing $V(D_2)$ except one exceptional
class $R_2$. We choose $R$, the partite class of $R_2$, to be the
third partite class in our dominating set. ($R$ may or may not be
identical to one of $K$, $L$, $M$, $N$. It is not difficult to see
that we will really need $R$ for the domination only if it is one of
these four classes.) If $u\notin \hat N_+(R)$ then $k$ must be an
outneighbor of $q$. Observe that $(u,q)$ cannot be an edge of $D$,
otherwise $q$, $k$ and $u$ would form a cyclic triangle. But $(q,u)$
cannot be an edge either, as $u\notin N_+(Q)$. Thus $u$ and every so
far undominated vertex is nonadjacent to $q$. Thus the set $U$ of
undominated vertices induces a subgraph $D[U]$ with $\beta
(D[U])=1$, otherwise adding $q$ we would get $\beta(D)\ge 3$. But
then by Lemma~\ref{compl} all vertices in $U$ can be dominated by
only one additional, fourth class.
\end{proof}

\subsection{General case}

Surprisingly, our proof of Theorem~\ref{main} is not a direct generalization of the argument proving
Theorem~\ref{main12} in the previous subsection. In fact, in a way it is conceptually simpler.

\begin{proof}[Proof of Theorem~\ref{main}.]
We have seen that $h(1)=1$ (and $h(2)=4$) is an upper bound for
$k(D)$ if $\beta (D)=1$ (and if $\beta (D)=2$). Now we prove that
$h(\beta)=3\beta +(2\beta +1)h(\beta -1)$ is an upper bound on
$k(D)$ if $\beta (D)=\beta\ge 2$. Let $D$ be a multipartite digraph
with no cyclic triangle and $\beta(D)=\beta$. Let $k_1, k_2, \dots,
k_{2\beta}$ be vertices of $D$, each from a different partite class,
such that $|\hat N_+(\cup_{i=1}^{2\beta}\{k_i\})|$ is maximal. Let
the partite class of $k_i$ be $K_i$ for all $i$ and let ${\cal K}$
denote $\cup_{i=1}^{2\beta} \{k_i\}$. First we declare the $2\beta$
partite classes of these vertices $k_i$ to be part of our dominating
set. Next we partition every other partite class into $2\beta +2$
parts. For an arbitrary partite class $Z\neq K_i$ $(i=1,\dots,
2\beta)$ we denote by $Z_0$ the set $Z\cap N_+({\cal K})$. For
$i=1,2,\dots, 2\beta$ let $Z_i$ be the set of vertices in
$Z\setminus Z_0$ that are not sending an edge to $k_i$, but sending
an edge to $k_j$ for all $j<i$. Finally, we denote by $Z_{2\beta
+1}$, the remaining part of $Z$, that is the set of those vertices
of $Z$ that send an edge to all vertices $k_1, k_2, \dots,
k_{2\beta}$. (As in the proof of Theorem~\ref{main12} we will refer
to the set $Z_i$ as the $i$-th part of $Z$.) The subgraph $D_i$ of
$D$ induced by the $i$-th parts of the partite classes of $D\setminus
(\cup_{i=1}^{2\beta} K_i)$ is also a multipartite digraph with no
cyclic triangle. For $1\le i\le 2\beta$ it satisfies $\beta (D_i)\le
\beta -1$, since adding $k_i$ to any transversal independent set of
$D_i$ we get a larger transversal independent set. So by induction
on $\beta$, each of these $2\beta$ digraphs $D_i$ can be dominated by
at most $h(\beta -1)$ partite classes. We add the appropriate
$2\beta h(\beta -1)$ partite classes to our dominating set.

If $\beta(D_{2\beta +1})\le \beta -1$ also holds then the whole graph can be dominated by choosing additional $h(\beta -1)$ partite classes. Otherwise let $\mathcal{L}=\{l_1,l_2,\dots, l_\beta\}$ be an independent set of size $\beta$ with all its vertices in $V(D_{2\beta +1})$ belonging to distinct partite classes (of $D$), that are denoted by $L_1, L_2, \dots, L_\beta$, respectively. We claim that in the remaining part of $D_{2\beta +1}$, i.e., in $D_{2\beta +1}\setminus (\cup_{i=1}^{\beta}L_i)$ there is no other independent set of size $\beta$ with all elements belonging to different partite classes. Assume indirectly that $m_1\in M_1,m_2\in M_2,\dots, m_\beta\in M_\beta$ form such an independent set $\mathcal{M}$. As $\cal{L}$ is a maximal transversal independent set, every element of a partite class different from $L_1, \dots, L_\beta$ is connected to at least one of the $l_i$'s. And since every element of $\cal{L}$ sends an edge to all the vertices $k_1,\dots, k_{2\beta}$, we must have $N_+({\cal K})\setminus (\cup_{i=1}^{\beta}L_i)\subseteq N_+(\cal L)$ otherwise a cyclic triangle would appear. (The latter is because if $k_i$ ($i\in\{1,2,\dots, 2\beta\}$) sends an edge to $v$, and $l_j$ ($j\in\{1,2,\dots, \beta\}$) sends an edge to $k_i$, moreover $l_j$ is connected with $v$ then the edge between $l_j$ and $v$ must be oriented towards $v$.)

\begin{center}
\includegraphics[scale=0.8]{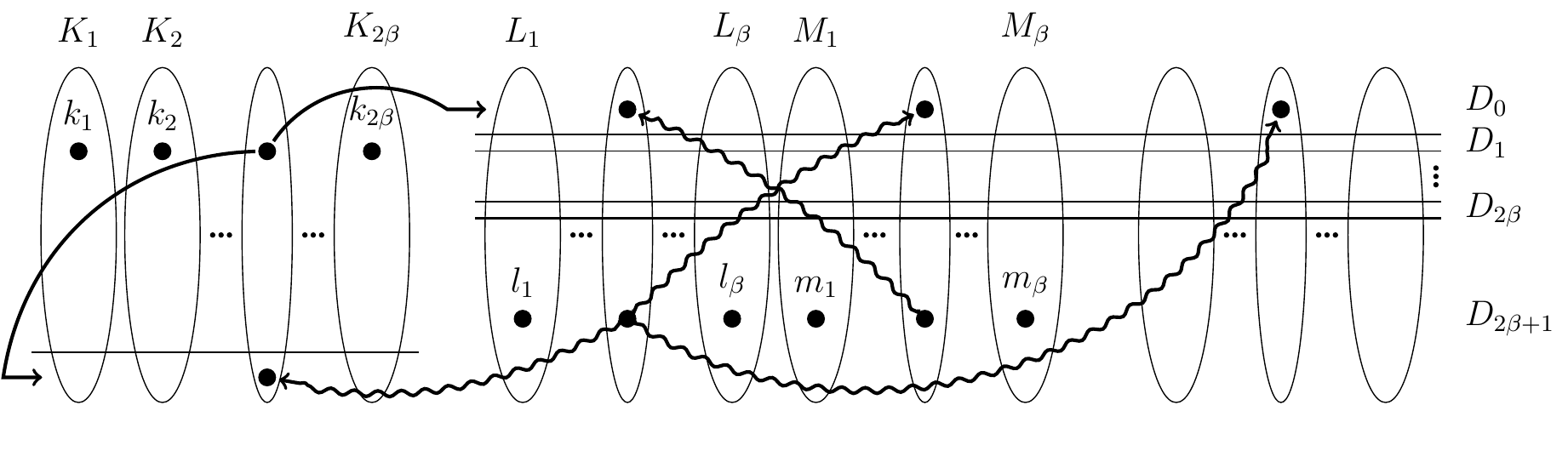}
\end{center}

Similarly, we have $N_+({\cal K})\setminus (\cup_{i=1}^{\beta}M_i)\subseteq N_+(\cal M)$. Thus if such an $\mathcal{M}$ exists then $\hat N_+({\cal K})\subseteq N_+({\cal L}\cup {\cal M})$ while $\hat N_+({\cal L}\cup {\cal M})$ also contains the additional vertices belonging to ${\cal L}\cup {\cal M}$. This contradicts the choice of ${\cal K}$. (Note that ${\cal L}\cup {\cal M}$ dominates also the vertices in $(K_1\cup\dots\cup K_{2\beta})\cap (N_+(k_1)\cup\dots\cup N_+(k_{2\beta}))$.) Thus if we add the classes $L_1, \dots, L_\beta$ to our dominating set, the still not dominated part of $D$ can be dominated by $h(\beta -1)$ further classes. So we constructed a dominating set of $D$ containing at most $2\beta +2\beta h(\beta -1)+\beta +h(\beta -1) = 3\beta +(2\beta +1)h(\beta -1)$ partite classes. This proves the statement.
\end{proof}

Note that we have proved a little bit more than stated in Theorem~\ref{main}. Namely, we showed that there is a set of at most $h_1(\beta)$ vertices of $D$ which dominates the whole graph except perhaps their own partite classes and at most $h_2(\beta)$ other exceptional classes. From the proof we obtain the recursion formula $h_1(\beta)\le 2\beta+(2\beta+1) h_1(\beta-1)$ and $h_2(\beta)\le \beta+(2\beta+1) h_2(\beta -1)$.

\subsection{Clique-acyclic digraphs}

For the proof of Theorem~\ref{trivpart} we will use the following theorem due to Chv\'atal and Lov\'asz \cite{CL}.

\medskip\noindent
\begin{theoremCL}
Every directed graph $D$ contains a semi-kernel, that is an
independent set $U$ satisfying that for every vertex $v\in D$ there
is an $u\in U$ such that one can reach $v$ from $u$ via a directed
path of at most two edges.
\end{theoremCL}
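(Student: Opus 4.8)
The plan is to prove Theorem CL by induction on the number of vertices of $D$, the empty digraph having the empty set as a (vacuous) semi-kernel. For the inductive step I would pick an arbitrary vertex $x$ and pass to the induced subdigraph $D'=D\setminus \hat N_+(x)$ obtained by deleting $x$ together with all of its outneighbors. Since $D'$ has fewer vertices, the induction hypothesis supplies a semi-kernel $U'$ of $D'$, that is, an independent set from which every vertex of $D'$ is reachable by a directed path of at most two edges inside $D'$. As $D'$ is an induced subdigraph, every such path is also a path of $D$, so $U'$ still reaches all of $V(D')$ within two edges in $D$; the only vertices that could remain uncovered are those of $\hat N_+(x)$.

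The argument then splits according to how $U'$ relates to the deleted vertex $x$. If no vertex of $U'$ sends an edge to $x$, then I would take $U=U'\cup\{x\}$. This set is independent: there is no edge from $U'$ to $x$ by assumption, and there is no edge from $x$ to $U'$ because every outneighbor of $x$ was removed when forming $D'$. Moreover $x$ reaches itself and every vertex of $N_+(x)$ within one edge, while $U'$ reaches the remaining vertices within two edges, so $U=U'\cup\{x\}$ is a semi-kernel of $D$. If, on the other hand, some $u\in U'$ satisfies $(u,x)\in E(D)$, then $u$ reaches $x$ in one edge and reaches every $v\in N_+(x)$ along the path $u\to x\to v$ of two edges; hence $U'$ already covers $\hat N_+(x)$ as well, and $U'$ itself is a semi-kernel of $D$ (it remains independent in $D$ because $D'$ is induced). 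In either case $D$ has a semi-kernel, completing the induction.

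The crucial design choice, and the step I expect to be the only real subtlety, is deleting the entire closed outneighborhood $\hat N_+(x)$ rather than just $x$ itself. This is exactly what guarantees that $x$ has no outneighbor among the remaining vertices, so that adjoining $x$ to $U'$ in the first case cannot destroy independence; one also has to check in both cases that the at-most-two-edge reachability is preserved when passing between $D'$ and $D$, which holds precisely because $D'$ is an induced subdigraph. No hypothesis on $D$ (such as the absence of cyclic triangles) is needed: the statement holds for every finite digraph, and the induction above uses only finiteness.
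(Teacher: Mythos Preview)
Your proof is correct and is in fact the classical Chv\'atal--Lov\'asz argument. Note, however, that the paper does not give its own proof of Theorem~CL: it simply quotes the result from \cite{CL} and then applies it in the proof of Theorem~\ref{trivpart}. So there is no ``paper's proof'' to compare against here; what you have written is essentially the original one-paragraph proof from \cite{CL}, with the induction on $|V(D)|$, the deletion of $\hat N_+(x)$, and the two-case analysis according to whether some $u\in U'$ dominates $x$.
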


\begin{proof}[Proof of Theorem~\ref{trivpart}.]
The statement is trivial for $\alpha(D)=1$, since a transitive tournament is dominated by its unique vertex of indegree $0$. We use induction on $\alpha=\alpha(D)$. Assume the theorem is already proven for $\alpha-1$. Consider $D$ with $\alpha(D)=\alpha$ and a semi-kernel $U$ in $D$ that exists by Theorem CL. We define a set $S$ with $|S|\le f(\alpha)$ elements dominating each vertex. Let $U\subseteq S$. Then $S$ already dominates the neighborhood of $U$. Denote by $T$ the second outneighborhood of $U$ (i.e., the set of all vertices not in $U$ and not yet dominated). Observe that for every vertex $w\in T$ there is a vertex $u\in U$ such that neither $(u,w)$ nor $(w,u)$ is an edge. Indeed, let $u$ be the vertex of $U$ from which $w$ can be reached by traversing two directed edges. Then $(w,u)\notin E(D)$ otherwise we would have a cyclic triangle. But $(u,w)\notin E(D)$ is immediate from knowing that $w$ is not in the first outneighborhood of $U$.
\smallskip\noindent
Partition $T$ into $|U|\le\alpha$ classes $L_u$ indexed by the elements of $U$ where $w\in L_u$ means that $u$ and $w$ are nonadjacent. Thus all vertices in each class $L_u$ are independent from the same vertex in $U$ implying that the induced subgraph $D[L_u]$ has independence number at most $\alpha-1$. Thus $D[L_u]$ can be dominated by at most $f(\alpha-1)$ vertices. Add these to $S$ for every $u\in U$. So all vertices can be dominated by at most $\alpha+\alpha f(\alpha-1)= f(\alpha)$ vertices completing the proof.

\begin{center}
\includegraphics[scale=0.8]{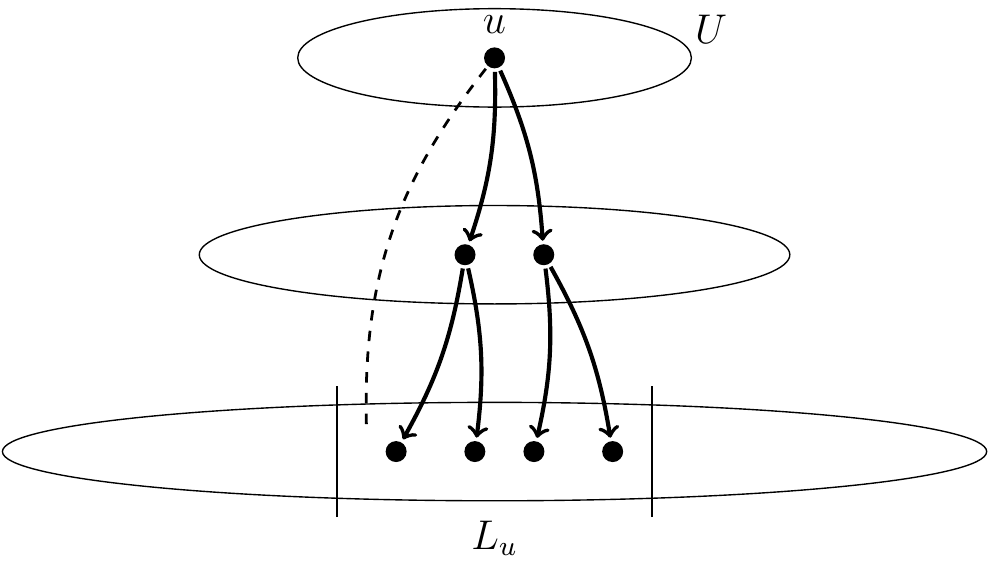}
\end{center}

\end{proof}

For $\alpha(D)=2$ the above theorem gives $\gamma(D)\le f(2)=4$. Compared to this the improvement of Theorem~\ref{triv2} is only $1$, but as already mentioned, the cyclically oriented five-cycle shows that $\gamma(D)\le 3$ is the best possible upper bound.

The proof of Theorem~\ref{triv2} goes along similar lines as the proof we had for the second statement of Theorem~\ref{main12}.

\begin{proof}[Proof of Theorem~\ref{triv2}.]
We use induction on the number of vertices in $D$. Let $p$ be a vertex of $D$, and partition the remaining vertices of $D$ into three parts. Let $V_1$ be the set of vertices that are dominated by $p$, $V_2$ the set of vertices nonadjacent to $p$, and let $V_3$ be the set of vertices which send an edge to $p$. We assume by induction that $D\setminus\{p\}$ can be dominated by three vertices. (The base case is obvious.) If at least one of these is located in $V_3$ then $p$ is also dominated by them and we are done. Otherwise we create a new dominating set. First we choose $p$, and by $p$ we dominate all the vertices in $V_1$. Observe that any two vertices in $V_2$ must be connected, because two nonadjacent vertices of $V_2$ and $p$ would form an independent set of size $3$. Thus $D[V_2]$ is a transitive tournament and so it can be dominated by just one vertex, let it be $q\in V_2$. Let $U$ be the set of remaining undominated vertices. That is, $U=V_3\setminus N_+(q)$. Consider an arbitrary element $u\in U$. We know that $u$ is dominated by a vertex of the dominating set of $D\setminus\{p\}$. Let this vertex be $k$, clearly it cannot belong to $V_3$ (then it would dominate $p$). We also have $k\notin V_1$, otherwise there is a cyclic triangle on the vertices $p,k$, and $u$. So $k\in V_2$, and thus $q$ sends an edge to $k$. Since $u$ is undominated, $(q,u)$ is not an edge of $D$. With the edge $(u,q)$, we would get a cyclic triangle on $u$, $q$ and $k$. So $u$ and all the vertices in $U$ are nonadjacent to $q$, therefore $\alpha(D[U])=1$ and thus $U$ can be dominated by one vertex $r$. Thus all vertices of $D$ are dominated by the $3$-element set $\{p, q, r\}$. This completes the proof.

\begin{center}
\includegraphics[scale=0.8]{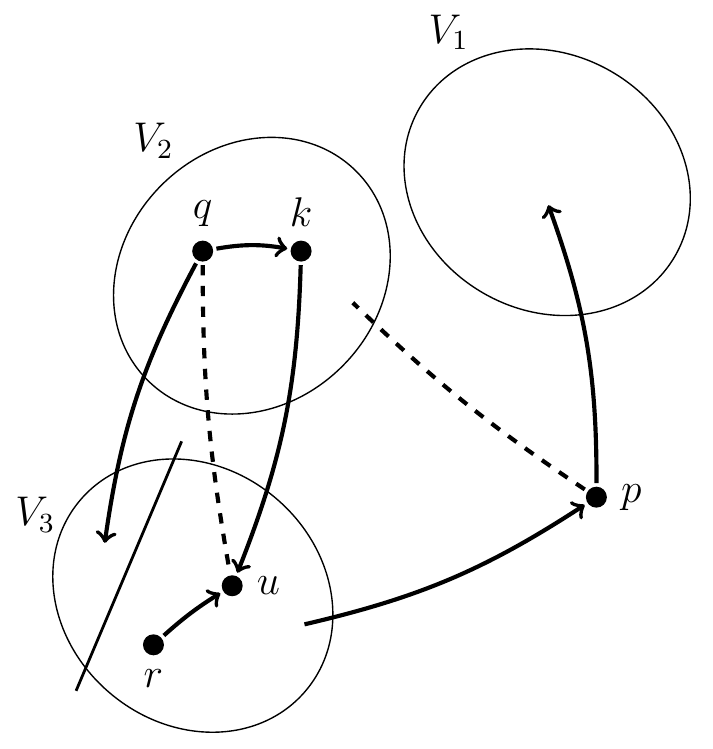}
\end{center}

\end{proof}

\bigskip\noindent
To prove Proposition~\ref{perfect} we formulate the following simple observation. Let $\chi(F)$ denote the chromatic number of graph $F$.

\begin{obs}\label{chi}
Let $D$ be a directed graph and $\bar D$ the complementary graph of the undirected graph underlying $D$. If $D$ is clique-acyclic, then $\gamma(D)\le\chi(\bar D)$.
\end{obs}

\begin{proof}
It follows from the definition of $\chi(\bar D)$ that the vertex set of $D$ can be covered by $\chi(\bar D)$ complete subgraphs of $D$. Since $D$ is clique-acyclic, all these complete subgraphs can be dominated by one of their vertices. Thus all vertices are dominated by these $\chi(\bar D)$ chosen vertices.
\end{proof}

\begin{proof}[Proof of Proposition~\ref{perfect}.]
If the orientation of $D$ is acyclic, then consider those vertices
that have indegree zero. Let these form the set $U_0$. Delete these
vertices and all vertices they dominate. Let set $U_1$ contain the
indegree zero vertices of the remaining graph, and delete the
vertices in $U_1\cup N_+(U_1)$. Proceed this way to form the sets
$U_2, \dots, U_s$, where finally there are no remaining vertices
after $U_s$ and its neighbors are deleted. It follows from the
construction that $U_0\cup U_1\cup\dots \cup U_s$ is an independent
set and dominates all vertices not contained in it.

The second statement immediately follows from Observation~\ref{chi} and the fact that $\chi(\bar D)=\alpha(D)$ if $D$ is perfect, an immediate consequence of the Perfect Graph Theorem \cite{LLperf}.
\end{proof}

\section{On the exceptional classes}\label{concl}
As already mentioned in the Introduction and also after the proof of
Theorem~\ref{main}, the statement of Theorem~\ref{main} could be
formulated in a somewhat stronger form. Namely, we do not only
dominate our multipartite digraph $D$ by $h(\beta)$ partite classes,
we actually dominate almost all of $D$ by $h_1(\beta)$ vertices,
where ``almost'' means that there is only a bounded number
$h_2(\beta)$ of partite classes not dominated this way. The first
appearance of this phenomenon is in Lemma~\ref{eros} where we showed
that if $\beta(D)=1$ then a single vertex dominates the whole graph
except at most one class. To complement this statement we show below
that this exceptional class is indeed needed, we cannot expect to
dominate the whole graph by a constant number of  vertices. In other
words, if we want to dominate with a constant number of singletons
(and not by simply taking a vertex from each partite class), then we
do need exceptional classes already in the $\beta(D)=1$ case.

For a bipartite digraph $D$ with partite classes $A$ and $B$ let
$\gamma_A(D)$ denote the minimum number of vertices in $A$ that
dominate $B$ and similarly let $\gamma_B(D)$ denote the minimum
number of vertices in $B$ dominating $A$. Let
$\gamma_0(D)=\min\{\gamma_A(D),\gamma_B(D)\}$.

\begin{theorem}
There exists a sequence of oriented complete bipartite graphs $\{D_k\}_{k=1}^{\infty}$ satisfying $\gamma_0(D_k)>k$.
\end{theorem}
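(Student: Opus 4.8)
The plan is to construct, for every $k$, an oriented complete bipartite graph $D_k$ in which no small set of vertices on either side can dominate the whole opposite side. The natural object to exploit is the structure of set systems: I would take the partite class $A$ to consist of subsets of some ground set $[n]$ (or elements indexed by such subsets), and $B$ to consist of points, and orient edges so that the domination pattern of $A$-vertices over $B$ mimics a covering problem. Concretely, the intuition is that if a vertex $a\in A$ dominates only those $b\in B$ corresponding to elements of the set assigned to $a$, then $\gamma_A(D_k)$ becomes a set-cover number, which can be forced to be large by choosing the sets to be small (e.g. singletons, or a sunflower-free / covering-design family). The same must simultaneously be arranged in the reverse direction so that $\gamma_B(D_k)$ is also large, and this symmetry is the delicate part.

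First I would fix the orientation convention: for each edge between $a\in A$ and $b\in B$, exactly one of $(a,b)$ or $(b,a)$ is present, so dominating $B$ from $A$ and dominating $A$ from $B$ are governed by complementary orientations. If I encode each $a\in A$ by a subset $S_a\subseteq B$ via $N_+(a)=S_a$, then automatically $N_+(b)=\{a: b\notin S_a\}$ for each $b\in B$. Thus $\gamma_A(D_k)$ is the minimum number of sets $S_a$ whose union is $B$ (a covering number), while $\gamma_B(D_k)$ is the minimum number of points $b$ such that every $a$ has some chosen $b\notin S_a$ — equivalently a covering number for the complementary system $\{B\setminus S_a\}$. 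The goal $\gamma_0(D_k)>k$ then reduces to designing a family $\{S_a\}$ so that both this family and its complementary family have covering number exceeding $k$.

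The cleanest realization I would attempt is a projective-plane or combinatorial-design construction, or more simply a Kneser-type / balanced incidence structure where both a family and its complements require many sets to cover. A robust alternative, avoiding number-theoretic existence questions, is a counting/probabilistic argument: let $B=[n]$ and let $A$ range over all $\binom{n}{r}$ subsets of size $r$; then covering $B$ from $A$ needs at least $n/r$ sets, and covering from the complements (size $n-r$) can be handled by choosing $r$ near $n/2$ so that \emph{both} a covering by $r$-sets and a covering by $(n-r)$-sets demand more than $k$ pieces once $n$ is large. Since any cover of $[n]$ by sets of size at most $m$ uses at least $n/m$ of them, taking $r=n-1$ on one side is too weak, so I would instead take a family all of whose sets \emph{and} complements are small — impossible for all sets, which signals that the honest route is to let $A$ index the sets and $B$ the points and only demand the covering bound in one orientation per level, then amplify by a disjoint-union / product trick across $k$ to force both parameters up simultaneously.

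The main obstacle I anticipate is exactly this tension: a family whose covering number is large typically consists of small sets, but then the complementary sets are large and trivially cover with few elements, killing $\gamma_B$. The key idea to resolve it will be to make the construction \emph{self-complementary} in a suitable sense, for instance via a symmetric design or a cyclic/algebraic incidence matrix whose row-cover and column-cover numbers are both large (the cyclically oriented structures appearing earlier in the paper, such as the oriented pentagon, suggest a cyclic template). I would therefore spend the bulk of the effort verifying that a single explicit incidence pattern forces both $\gamma_A(D_k)>k$ and $\gamma_B(D_k)>k$, and confirming that the chosen orientation indeed contains no obstruction (it is automatically clique-acyclic here since a bipartite graph has no triangles at all). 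Once such a pattern is exhibited for each $k$, the sequence $\{D_k\}$ is immediate and the theorem follows.
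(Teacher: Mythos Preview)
Your set-system reformulation is correct and in fact already contains a complete proof --- you just talk yourself out of it at the decisive moment. With $B=[n]$, $A$ the family of all $r$-subsets, and $N_+(a)=a$, you correctly get $\gamma_A=\lceil n/r\rceil$. Where you go wrong is in reading $\gamma_B$: it is the \emph{transversal number} of the complement family $\{[n]\setminus a:a\in A\}$, not a covering-of-$B$-by-large-sets number. Since $A$ consists of \emph{all} $r$-subsets, a set $T\subseteq[n]$ hits every $(n-r)$-set if and only if $T$ is contained in no $r$-set, i.e.\ $|T|\ge r+1$; hence $\gamma_B=r+1$. There is no tension at all: take $r=k$ and $n>k^2$, and both $\gamma_A\ge n/r>k$ and $\gamma_B=k+1>k$ hold. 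The ``sets and complements both small'' obstruction you worry about, and the detour through projective planes, self-complementary designs, or amplification tricks, are all unnecessary.

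For comparison, the paper does something genuinely different: it notes the probabilistic argument in passing but gives an explicit \emph{recursive} construction, starting from a cyclically oriented $K_{2,2}$ and building $D_k$ by blowing up the vertices of a cyclic $C_{2k+2}$ and inserting copies of $D_{k-1}$ between non-consecutive blocks. Your (corrected) set-system construction is more elementary and gives an explicit one-shot example with $|B|=k^2+1$ and $|A|=\binom{k^2+1}{k}$; the paper's construction is recursive, keeps the two sides of equal size, and has an inductive proof of the lower bound via a digraph homomorphism back to $D_{k-1}$.
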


We note that the existence of $D_k$ with $n$ vertices in each
partite class and satisfying $\gamma_0(D_k)>k$ follows by a standard
probabilistic argument provided that \mbox{$2{n\choose k}(1-2^{-k})^n<1$}. Our
proof below is constructive, however. 

\begin{proof}
We give a simple recursive construction for $D_k$ in which we blow up the
vertices of a cyclically oriented cycle $C_{2k+2}$ and connect the blown up
versions of originally nonadjacent vertices that are an odd distance away
from each other by copies of the already constructed digraph $D_{k-1}$. 

Let $D_1$ be a cyclic $4$-cycle, i.e., a cyclically oriented
$K_{2,2}$. It is clear that neither partite class in this digraph
can be dominated by a single element of the other partite class.
Thus $\gamma_0(D_1)>1$ holds.

Assume we have already constructed $D_{k-1}$ satisfying $\gamma_0(D_{k-1})>k-1$. Let the two partite classes of $D_{k-1}$ be $A_{k-1}=\{a_1, \dots, a_m\}$ and $B_{k-1}=\{b_1, \dots, b_m\}$. Now we construct $D_k$ as follows. Let the vertex set of $D_k$ be $V(D_k)=A_k\cup B_k$, where
$$A_k:=\{(j,a_i): 1\le j\le k+1, 1\le i\le m\},$$
$$B_k:=\{(j,b_i): 1\le j\le k+1, 1\le i\le m\}.$$
There will be an oriented edge from vertex $(j,a_i)$ to $(r,b_s)$ if either $j=r$, or $j\not\equiv r+1 \pmod{k+1}$ and $(a_i,b_s)\in E(D_{k-1})$. All other edges between $A_k$ and $B_k$ are oriented towards $A_k$, i.e., this latter set of edges can be described as
$$\{((r,b_s),(j,a_i))\, :\, j\equiv r+1\!\!\!\! \pmod{k+1}\quad {\rm or}\quad ((b_s,a_i)\in E(D_{k-1})\ {\rm and}\ j\ne r)\}.$$

\smallskip

It is only left to prove that $\gamma_0(D_k)>k$. Let us use the notation
$A_k(j)=\{(j,a_i): 1\le i\le m\}$, $B_k(j)=\{(j,b_i): 1\le i\le m\}$. Consider
a set $K$ of $k$ vertices of $A_k$, we show it cannot dominate $B_k$. There
must be an $r\in\{1,\dots, k+1\}$ by pigeon-hole for which $K\cap
A_k(r)=\emptyset$ and $K\cap A_k(r+1)\neq\emptyset$. (Addition here is meant
modulo $(k+1)$.) Fix this $r$. We claim that some vertex in $B_k(r)$ will not
be dominated by $K$. Indeed, the vertex $(r+1,a_i)\in K\cap A_k(r+1)$ does not
send any edge into $B_k(r)$, so we have only at most $k-1$ vertices in $K$ that can dominate vertices in $B_k(r)$ and all these vertices are in $A_k\setminus A_k(r)$. Notice that the induced subgraph of $D_k$ on $B_k(r)\cup A_k\setminus A_k(r)$ admits a digraph homomorphism (that is an edge-preserving map) into $D_{k-1}$. Indeed, the projection of each vertex to its second coordinate gives such a map by the definition of $D_k$. So if the above mentioned $k-1$ vertices would dominate the entire set $B_k(r)$, then their homomorphic images would dominate the homomorphic image of $B_k(r)$ in $D_{k-1}$. The latter image is the entire set $B_{k-1}$ and by our induction hypthesis it cannot be dominated by $k-1$ vertices of $A_{k-1}$. Thus we indeed have $\gamma_{A_k}(D_k)>k$.

The proof of $\gamma_{B_k}(D_k)>k$ is similar by symmetry. Thus we have $\gamma_0(D_k)>k$ as stated.
\end{proof}

\end{document}